\documentclass[11pt,reqno]{amsart}
\usepackage{amssymb,amsmath,tabularx}
\usepackage{amsthm,verbatim}
\usepackage[all]{xy}
\usepackage[bookmarks=true]{hyperref}
\setlength\topmargin{-6pt}
\setlength\oddsidemargin{.5in}
\setlength\evensidemargin{.5in}
\textheight 8.4in
\textwidth 5.5in

\numberwithin{equation}{section}

\newtheorem{thm}{Theorem}[section]
\newtheorem{lem}[thm]{Lemma}

\newtheorem{cor}[thm]{Corollary}
\theoremstyle{definition}

\newcommand{\module}{\operatorname{\mathrm{mod}}}
\newcommand{\GL}{\mathrm{GL}}

\newcommand{\Lie}{\operatorname{\mathrm{Lie}}}

\newcommand{\Liemax}{\operatorname{\mathrm{Lie}^\mathrm{max}}}
\newcommand{\sym}[1]{\mathfrak{S}_{#1}}
\newcommand{\Ind}{\operatorname{Ind}}

\begin{document}
\title[Asymptotic behaviour]{Asymptotic behaviour of \\ Lie powers and Lie modules}

\author{Roger M. Bryant}
\address[Roger M. Bryant]{School of Mathematics, University of Manchester, Manchester M13 9PL, UK.}
\email{roger.bryant@manchester.ac.uk}

\author{Kay Jin Lim}
\author{Kai Meng Tan}
\address[Kay Jin Lim and Kai Meng Tan]{Department of Mathematics, National University of Singapore, Block S17, 10 Lower Kent Ridge Road, Singapore 119076.}
\email[K. J. Lim]{matlkj@nus.edu.sg}
\email[K. M. Tan]{tankm@nus.edu.sg}

\date{November 2010}

\thanks{Supported by EPSRC Standard Research Grant EP/G024898/1 and EP/G025487/1, and by MOE Academic Research Fund R-146-000-135-112.}

\subjclass[2000]{17B01, 20C30 (primary), 20C20 (secondary)}

\begin{abstract}
Let $V$ be a finite-dimensional $FG$-module, where $F$ is a field of prime characteristic $p$ and $G$ is a group.  We show that, when $r$ is not a power of $p$, the Lie power $L^r(V)$ has a direct summand $B^r(V)$ which is a direct summand of the tensor power $V^{\otimes r}$ and which satisfies $\dim B^r(V)/\dim L^r(V) \to 1$ as $r \to \infty$.  Similarly, for the same values of $r$, we obtain a projective submodule $C(r)$ of the Lie module $\Lie(r)$ over $F$ such that $\dim C(r)/\dim \Lie(r) \to 1$ as $r \to \infty$.
\end{abstract}

\maketitle

\section{Introduction}\label{S:intro}

Let $F$ be a field of prime characteristic $p$, $G$ a group, and $V$ a finite-dimensional (right) $FG$-module.  Write $T(V)$ for the tensor algebra of $V$:  thus $T(V) = \bigoplus_{r=0}^{\infty} V^{\otimes r}$.  Let $L(V)$ denote the Lie subalgebra of $T(V)$ generated by $V$:  thus $L(V) = \bigoplus_{r=1}^{\infty} L^r(V)$, where $L^r(V) = L(V) \cap V^{\otimes r}$.  We call $L^r(V)$ the $r$th Lie power of $V$.  The tensor power $V^{\otimes r}$ is an $FG$-module (under the `diagonal' action of $G$) and $L^r(V)$ is a submodule of $V^{\otimes r}$.  The dimension of $L^r(V)$ is given by a formula of Witt (see \cite[Theorem 5.11]{WMAKDS}):
\begin{equation} \label{E:dim(L^r(V))}
\dim L^r(V) = \frac{1}{r} \sum_{d \mid r} \mu(d) n^{r/d},
\end{equation}
where $n = \dim V$ and $\mu$ denotes the M\"{o}bius function.

The modules $L^r(V)$ have been extensively studied in recent years.  We refer to the paper of Bryant and Schocker \cite{RBMS} and the works cited there for details of progress on the problem of describing $L^r(V)$ up to isomorphism.  When $r$ is not divisible by $p$, $L^r(V)$ is a direct summand of $V^{\otimes r}$ (see, for example, \cite[\S 3.1]{SDKE}).  Although this does not hold in general, our first main result shows that, in an asymptotic sense, `most' of $L^r(V)$ is a direct summand of $V^{\otimes r}$ provided that $r$ is not a power of $p$.

\begin{thm} \label{T:main1}
Suppose that $\dim V>1$. Let $\mathcal{A}$ be the set of all positive integers $r$ such that $r$ is not a power of $p$. Then, for each $r \in \mathcal{A}$, there is a direct summand $B^r(V)$ of $L^r(V)$ such that $B^r(V)$ is a direct summand of $V^{\otimes r}$ and
$$
\lim_{\substack{r \to \infty \\ r \in \mathcal{A}}} \frac{\dim B^r(V)}{\dim L^r(V)} = 1.
$$
\end{thm}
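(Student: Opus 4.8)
The plan is to combine the decomposition of Lie powers of Bryant and Schocker \cite{RBMS} with an elementary dimension estimate based on Witt's formula \eqref{E:dim(L^r(V))}. Recall that there are functors $B^s$ on $FG$-modules, one for each positive integer $s$, such that $B^s(V)$ is always a direct summand of $V^{\otimes s}$, such that $B^s(V)=L^s(V)$ when $p\nmid s$, and such that for every $s$
$$
L^s(V)\;\cong\;\bigoplus_{\substack{k\ge 0\\ p^k\mid s}}L^{p^k}\!\big(B^{s/p^k}(V)\big).
$$
For $r\in\mathcal{A}$ write $r=p^a m$ with $p\nmid m$; as $r$ is not a power of $p$ we have $a\ge 1$ and $m\ge 2$. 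I would simply take $B^r(V)$ to be the $k=0$ summand on the right-hand side. Both requirements of the theorem then hold automatically: $B^r(V)$ is a direct summand of $L^r(V)$ because it is literally one of the summands displayed above, and it is a direct summand of $V^{\otimes r}$ by the defining property of the functor $B^r$. So it only remains to show that the complement
$$
D^r(V)\;:=\;\bigoplus_{k=1}^{a}L^{p^k}\!\big(B^{p^{a-k}m}(V)\big)
$$
satisfies $\dim D^r(V)/\dim L^r(V)\to 0$ as $r\to\infty$ through $\mathcal{A}$.

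The estimate rests on three elementary points, all consequences of the Witt formula: (i) $\dim L^s(W)\le (\dim W)^s/s$ for any module $W$ and any $s\ge 1$ (equivalently, $s\cdot\dim L^s(W)$ counts aperiodic words of length $s$ in $\dim W$ letters, so is at most $(\dim W)^s$); (ii) by the same token $\dim L^{p^k}(W)\le(\dim W)^{p^k}/p^k$; and (iii), isolating the leading term in \eqref{E:dim(L^r(V))}, $\dim L^r(V)\ge n^r/(2r)$ for all large $r$, where $n=\dim V\ge 2$. Also $\dim B^s(V)\le\dim L^s(V)$ since $B^s(V)$ is a summand of $L^s(V)$. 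Feeding these into $D^r(V)$: the $k$-th summand has dimension at most $\dfrac{1}{p^k}\Big(\dfrac{n^{r/p^k}}{r/p^k}\Big)^{p^k}=\dfrac{n^r}{p^k\,(r/p^k)^{p^k}}$, so, dividing by $\dim L^r(V)$ and putting $b_k:=r/p^k=p^{a-k}m\ge 2$, it contributes at most
$$
\varepsilon_k\;:=\;\frac{2}{b_k^{\,p^k-1}}\;\le\;\frac{2}{2^{\,p^k-1}}\;=\;2^{\,2-p^k}\;\le\;2^{\,2-k}
$$
to the ratio $\dim D^r(V)/\dim L^r(V)$ (using $p^k\ge k$).

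It remains to see that $\sum_{k=1}^{a}\varepsilon_k\to 0$; this is the one point needing care, since $a$ grows with $r$, so knowing each $\varepsilon_k\to 0$ is not enough. A standard tail split does it: given $\delta>0$, fix $K$ with $\sum_{k>K}2^{2-k}<\delta/2$; then for $k\le K$ we have $\varepsilon_k\le 2/b_k\le 2p^K/r$ (as $b_k^{\,p^k-1}\ge b_k\ge r/p^K$), so $\sum_{k\le K}\varepsilon_k\le 2Kp^K/r<\delta/2$ once $r$ is large enough. Hence $\sum_{k=1}^{a}\varepsilon_k<\delta$ for all large $r\in\mathcal{A}$, so $\dim D^r(V)/\dim L^r(V)\to 0$, and since $L^r(V)=B^r(V)\oplus D^r(V)$ this yields $\dim B^r(V)/\dim L^r(V)\to 1$.

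The real obstacle is thus having the Bryant--Schocker decomposition available; granted that, the argument is the bookkeeping above, the only subtlety being to keep the Witt-formula bound in its sharp form $\dim L^s(W)\le(\dim W)^s/s$ (a lossier bound, once raised to the power $p^k$, would destroy the summability that the tail split needs). Functoriality is not actually used: the $B^s$ are defined for arbitrary $FG$-modules, so the whole proof is the estimate just sketched together with the input from \cite{RBMS}.
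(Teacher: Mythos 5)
Your argument is correct, but it is not the route the paper takes: the two proofs lean on different (though closely related) Bryant--Schocker inputs. You use the decomposition theorem from \cite{RBMS} directly, namely $L^r(V)\cong\bigoplus_{p^k\mid r}L^{p^k}\!\bigl(B^{r/p^k}(V)\bigr)$, so that $B^r(V)$ is literally one of the summands of $L^r(V)$ and the job is to show the complement $D^r(V)=\bigoplus_{k\geqslant 1}L^{p^k}(B^{r/p^k}(V))$ is asymptotically negligible. The paper instead invokes the factorisation identity from \cite{RBMJ} (Theorem \ref{decomposition theorem}), $\bigoplus_{i=0}^{m}p^{m-i}B^{p^{m-i}k}(V)^{\otimes p^i}\cong L^k(V^{\otimes p^m})$, takes dimensions, and massages the result into a recurrence \eqref{E:eq2} for the ratio $b_{p^mk}$, which is then bounded via Lemma \ref{L:<=1}, Corollary \ref{C:<=1} and Lemma \ref{L:>=}. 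Both arguments are powered by exactly the same Witt-formula estimate (your points (i)--(iii) are the content of Lemma \ref{L:maininequality}), and both exploit $\dim B^s(V)\leqslant\dim L^s(V)$ (the paper's $b_s\leqslant 1$). Your route is arguably more transparent: the $k$-th error term $\varepsilon_k=2/b_k^{\,p^k-1}$ decays geometrically in $k$, and a standard tail split handles the fact that the number of terms grows with $r$; the paper reaches the same conclusion by showing the $a_i$ are eventually decreasing and bounding only the extreme terms $a_1,a_m$. What the paper's choice of identity buys is that the same \cite{RBMJ} isomorphism is reused verbatim in Section \ref{lie module section} after applying the Schur functor, so the two theorems share one skeleton; your approach would require a separate (though analogous) analysis there.

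One small slip: from $r=p^am$ with $p\nmid m$ and $r\in\mathcal{A}$ you conclude ``$a\geqslant 1$ and $m\geqslant 2$''. Only $m\geqslant 2$ follows; $a$ can be $0$ (e.g.\ $p=2$, $r=3$). This is harmless, since when $a=0$ the decomposition has a single term, $L^r(V)=B^r(V)$, and the ratio is exactly $1$, but the statement as written is incorrect and should be replaced by ``$m\geqslant 2$, and the case $a=0$ is trivial''.
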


The modules $B^r(V)$ are the modules denoted by $B_r$ in the decomposition theorem of \cite{RBMS} and their dimensions are given by a recurrence formula.  When $r = p^m$ with $m > 0$, we have $B_r = 0$.  Thus, although we conjecture that a result like Theorem \ref{T:main1} holds without any restrictions on $r$, we cannot use \cite{RBMS} when $r$ is a power of $p$.

The other main result of this paper concerns the Lie module $\Lie(r)$.  This is a module for the symmetric group $\sym{r}$ and can be defined as follows.  Let $E_n$ be a vector space over $F$ of finite dimension $n$, where $n \geqslant r$, and let $\{e_1,\dotsc, e_n\}$ be a basis for $E_n$.  Then $\Lie(r)$ is the subspace of $L^r(E_n)$ spanned by all elements of the form $[e_{1\pi},e_{2\pi}, \dotsc, e_{r\pi}]$ where $\pi \in \sym{r}$ and $[e_{1\pi},e_{2\pi}, \dotsc, e_{r\pi}]$ denotes the `left-normed' Lie product in which $[e_{1\pi},e_{2\pi}] = e_{1\pi} \otimes e_{2\pi} - e_{2\pi}\otimes e_{1\pi}$ and $[e_{1\pi},e_{2\pi}, \dotsc, e_{r\pi}] = [[e_{1\pi},e_{2\pi}, \dotsc, e_{(r-1)\pi}], e_{r\pi}]$.  Clearly $\Lie(r)$ is a (right) $F\sym{r}$-module, where the action of $\sym{r}$ comes from its right action on $\{1,\dotsc, r\}$. Furthermore, the definition of $\Lie(r)$ is independent of the choice of $n$ with $n\geqslant r$. The dimension of $\Lie(r)$ is $(r-1)!$ (as follows from \cite[Theorem 5.11]{WMAKDS} by taking $n_1=n_2=\cdots=1$).

The main motivation for our result on $\Lie(r)$ comes from the work of Selick and Wu \cite{SW}.  These authors considered the problem of finding natural homotopy decompositions of the loop suspension of a $p$-torsion suspension and proved that this problem is equivalent to the algebraic problem of finding natural coalgebra decompositions of the primitively generated tensor algebras over the field with $p$ elements.  They determined (see \cite[Theorem 6.5]{SW}) the finest coalgebra decomposition of a tensor algebra (over an arbitrary field).  Their result can be described as a functorial Poincar\'e-Birkhoff-Witt theorem.  However, in order to compute the factors in this decomposition, it is necessary to know a maximal projective submodule, $\Lie^{\max}(r)$, of the Lie module $\Lie(r)$.

The projective modules for the symmetric groups over fields of
positive characteristic $p$ are not known in general.  Their structure depends
on the decomposition matrices for symmetric groups and the determination of these is a famous open problem.  However, according to \cite{SW2}, even if the modules $\Lie^{\max}(r)$
cannot be computed precisely, it is of interest to know how quickly their dimensions grow and
whether or not the growth rate is exponential. Determination of $\Liemax(6)$ and $\Liemax(8)$ in characteristic 2 in \cite{SW2} suggests that $\Liemax(r)$ is relatively
large compared with $\Lie(r)$.  If this is true in general, it has the desirable consequence that the factors in the functorial PBW theorem are relatively small.

Some progress has been made in the understanding of $\Liemax(r)$ in the case where $r = pk$ with $p \nmid k$.  Erdmann and Schocker \cite{ES} established, amongst other things, that there is a one-to-one correspondence between the non-projective indecomposable summands of $\Lie(pk)$ and the indecomposable summands of $\Lie(k)$.  Erdmann and Tan \cite{ET} provided an upper bound for $\dim \Liemax(pk)$ and showed that the ratio of this upper bound to $\dim \Lie(pk)$ approaches $1$ as $k$ tends to infinity.  They conjectured that the same fact holds for $\dim \Liemax(pk)/ \dim \Lie(pk)$.

Our second main result establishes this conjecture and considerably more.

\begin{thm} \label{T:main2}
Let $\mathcal{A}$ be the set of all positive integers $r$ such that $r$ is not a power of $p$.  Then
$$
\lim_{\substack{r\to \infty \\r \in \mathcal{A}}} \frac{\dim \Liemax(r)}{\dim\Lie(r)} = 1.
$$
\end{thm}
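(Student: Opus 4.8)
The plan is to transport the decomposition of \cite{RBMS} underlying Theorem~\ref{T:main1} from Lie powers to the Lie module, by means of the standard realisation of $\Lie(r)$ as a weight space. (Since $\mathbb{F}_p$ is a splitting field for every $\sym{r}$, neither $\dim\Lie(r)$ nor $\dim\Liemax(r)$ changes under extension of scalars, so the statement is unambiguous and we may take $F$ as large as is convenient.) Fix $n\geqslant r$ and retain the notation $E_n$, $\{e_1,\dots,e_n\}$ from the definition of $\Lie(r)$. Every left-normed product $[e_{1\pi},\dots,e_{r\pi}]$, $\pi\in\sym{r}$, is homogeneous of multidegree $(1^r,0^{n-r})$ and lies in $L^r(E_n)$, so $\Lie(r)$ is contained in the $(1^r,0^{n-r})$-weight component of $L^r(E_n)$; as both spaces have dimension $(r-1)!$ by the multigraded form of Witt's formula (this is exactly the computation of $\dim\Lie(r)$ recalled above), $\Lie(r)$ \emph{equals} that component. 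The corresponding component of $E_n^{\otimes r}$ is the span of the tensors $e_{1\pi}\otimes\cdots\otimes e_{r\pi}$, $\pi\in\sym{r}$, a free $F\sym{r}$-module of rank one. Taking the $(1^r,0^{n-r})$-weight component is an additive exact functor on polynomial $\GL_n$-modules of degree $r$ — it is a torus weight space, equivalently the Schur functor for the Schur algebra $S(n,r)$ — and so sends direct summands to direct summands.

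Now I would invoke \cite{RBMS}: the decomposition $L^r(E_n)=B^r(E_n)\oplus W^r(E_n)$, with $B^r(E_n)$ the module $B_r$ of Theorem~\ref{T:main1}, is natural in $E_n$ and hence is a decomposition of polynomial $\GL_n$-modules, in which $B^r(E_n)$ is moreover a direct summand of $E_n^{\otimes r}$. Applying the weight functor yields a decomposition $\Lie(r)=C(r)\oplus D(r)$ of $F\sym{r}$-modules, where $C(r)$, the weight component of $B^r(E_n)$, is a direct summand of a free $F\sym{r}$-module of rank one, hence projective; thus $C(r)$ is a projective submodule of $\Lie(r)$ (with $\dim C(r)$ forced to be independent of the choice of $n\geqslant r$). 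Since $F\sym{r}$ is self-injective, every projective submodule of $\Lie(r)$ splits off, and by the Krull--Schmidt theorem any such has dimension at most $\dim\Liemax(r)$; therefore
\begin{equation*}
(r-1)!\ \geqslant\ \dim\Liemax(r)\ \geqslant\ \dim C(r)\ =\ (r-1)!-\dim D(r),
\end{equation*}
and it remains only to prove that $\dim D(r)/(r-1)!\to 0$ as $r\to\infty$ through $\mathcal{A}$, after which the theorem follows by a squeeze.

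This last limit is the heart of the proof, and it is where the hypothesis $r\in\mathcal{A}$ is essential: when $r$ is a power of $p$ one has $B^r=0$, so $D(r)=\Lie(r)$ and the ratio is identically $1$. For $r\in\mathcal{A}$, $D(r)$ is the $(1^r,0^{n-r})$-weight component of $W^r(E_n)$, and by \cite{RBMS} the functor $W^r$ is an explicit direct sum of $O(\log r)$ modules, each built by applying a symmetric power $S^{p^i}$ or a Lie power $L^{p^i}$ ($i\geqslant 1$) to Lie powers of strictly smaller degree. The weight component of each building block admits a direct combinatorial count: for one of the shape $S^{p^i}\bigl(L^{j}(E_n)\bigr)$ with $p^ij=r$, distributing $\{1,\dots,r\}$ into $p^i$ unordered blocks of size $j$ and placing a basis vector of $\Lie(j)$ on each block gives weight-component dimension
\begin{equation*}
\frac{1}{p^i!}\cdot\frac{r!}{(j!)^{p^i}}\cdot\bigl((j-1)!\bigr)^{p^i}\ =\ \frac{r!}{p^i!\,j^{p^i}},
\end{equation*}
whose ratio to $(r-1)!$ equals $\frac{p^i}{p^i!\,j^{p^i-1}}$ (using $r=p^ij$); an analogous count for a block of the shape $L^{N}\bigl(L^{p^i}(E_n)\bigr)$ with $Np^i=r$ yields ratio $1/(p^i)^{N-1}$. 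Since $r\in\mathcal{A}$ rules out the degenerate cases $j=1$ and $N=1$ (which would make $r$ a power of $p$), we have $p^i-1\geqslant 1$ and $j,N\geqslant 2$ in all the relevant terms, and each such ratio then tends to $0$ as $r\to\infty$ through $\mathcal{A}$: if the $p$-part of $r$ stays bounded the exponents $j$ and $N$ tend to infinity, while if the $p$-part grows the factorial $p^i!$ (respectively the power $(p^i)^{N-1}$) grows fast enough to swamp everything else; summing the $O(\log r)$ terms preserves this, giving $\dim D(r)=o\bigl((r-1)!\bigr)$. The principal obstacle is thus essentially bookkeeping: extracting from \cite{RBMS} the precise list of building blocks of $W^r$ and matching each to an estimate of the above type; no idea beyond Witt's formula \eqref{E:dim(L^r(V))} and crude Stirling-type bounds is needed. (In particular this establishes, and is considerably more general than, the conjecture of Erdmann and Tan \cite{ET} on $\dim\Liemax(pk)/\dim\Lie(pk)$.)
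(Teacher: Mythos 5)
Your outline coincides with the paper's at every structural joint: reduce to an infinite (in fact splitting) field, realise $\Lie(r)$ as the $(1^r,0^{n-r})$-weight component of $L^r(E_n)$, note that the weight functor (Schur functor) is additive and exact, obtain $C(r)=f_r\bigl(B^r(E_n)\bigr)$ as a projective direct summand of $\Lie(r)$, observe that the dimension is independent of $n\geqslant r$, and reduce the theorem to $\dim C(r)/(r-1)!\to 1$.  Where you part ways with the paper is in the estimate of the complement $D(r)$.  You propose to feed the \emph{Lie-power} decomposition of \cite{RBMS}, $L^{p^mk}(E_n)\cong\bigoplus_{i=0}^m L^{p^i}\bigl(B_{p^{m-i}k}\bigr)$, through the weight functor and bound each summand combinatorially.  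The paper instead feeds the \emph{tensor-power} recurrence of \cite{RBMJ} (Theorem~\ref{decomposition theorem}) through the weight functor; this is the crucial technical choice, because the Donkin--Erdmann lemma (Lemma~\ref{L:DE}) computes $f_r$ of a tensor product cleanly, whereas $f_r$ of a \emph{Lie} power of a general polynomial module is not covered by Lemma~\ref{L:DE} or Lemma~\ref{L:dim} (the latter handles only $L^k(E_n^{\otimes q})$, not $L^{p^i}$ applied to a proper submodule such as $B_{p^{m-i}k}$).  Interestingly, if your block count were correct, the ratio it produces for the $i$th summand would be exactly $(p^{m-i}k)^{-(p^i-1)}$, which is the paper's $a'_i$; so the two routes land on the same arithmetic and the same final estimate.

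There are, however, genuine gaps in your execution. First, your description of the \cite{RBMS} complement is wrong: no symmetric powers $S^{p^i}$ occur there, and the inner objects are the modules $B_{p^{m-i}k}$, which are \emph{not} Lie powers (you appear to be substituting $L^{p^{m-i}k}(E_n)$ for $B_{p^{m-i}k}$, which is legitimate only as a dimension \emph{upper bound} and needs to be said explicitly). Second, and more seriously, the combinatorial count you give for the weight space of a Lie power applied to a non-tensor module is asserted, not proved. The formula $\dim f_r\bigl(L^N(M)\bigr) = \tfrac{r!}{(j!)^N N!}\cdot\bigl(\dim f_j(M)\bigr)^N\cdot (N-1)!$ (with $\deg M=j$, $Nj=r$) is true, but it requires the multigraded Witt formula for the free Lie algebra on the graded vector space $M$; this is a separate lemma that the paper never needs because its recurrence \eqref{E:decompthm} involves only tensor powers of the $B$'s, which Lemma~\ref{L:DE} handles directly and which yields the exact identity \eqref{E:recurrenceLienew} rather than an inequality. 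If you supply that lemma, fix the parameter labelling (outer index $p^i$, inner degree $p^{m-i}k$, not the other way round), and drop the reference to symmetric powers, your argument closes; but as written the central estimate rests on an unjustified dimension formula and a misremembered decomposition.
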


The theorem shows, in particular, that $\dim\Liemax(r)$ grows exponentially with $r$ when $r$ is not a power of $p$.  As in the case of Theorem \ref{T:main1}, we conjecture that the restriction on $r$ can be removed.  However, our methods work only for $r \in \mathcal{A}$.  In the proof, we quickly reduce to the case where $F$ is an infinite field and then consider the $n$-dimensional natural module $E_n$ for the general linear group $\GL_n(F)$, where $n \geqslant r$.  The image of $L^r(E_n)$ under the Schur functor is the Lie module $\Lie(r)$ (see Section \ref{lie module section}).  With $B^r(E_n)$ as in Theorem \ref{T:main1}, the image $C(r)$ of $B^r(E_n)$ under the Schur functor is a projective submodule of $\Lie(r)$.  We describe the dimension of $C(r)$ by means of a recurrence formula and show that $\dim C(r)/ \dim \Lie(r) \to 1$ as $r \to \infty$ with $r \in \mathcal{A}$.

Theorem \ref{T:main1} will be proved in Section \ref{lie power section} and Theorem \ref{T:main2} in Section \ref{lie module section}.  Throughout this paper $F$ denotes a field of prime characteristic $p$.  All of our algebras and vector spaces are taken over $F$, unless otherwise stated, and all tensor products are also taken over $F$.  All modules are right modules.

\section{Modular Lie powers} \label{lie power section}

In this section we shall prove Theorem \ref{T:main1}.

Let $G$ be a group and let $V$ be a finite-dimensional $FG$-module.  The decomposition theorem of \cite{RBMS} yields, for each positive integer $r$, a certain direct summand $B_r$ of $L^r(V)$ such that $B_r$ is a direct summand of $V^{\otimes r}$.  Here we write $B^r(V)$ instead of $B_r$. We shall use the following result, where we write $p^iU$ for the direct sum of $p^i$ copies of a module $U$.

\begin{thm}[{\cite[Theorem 4.2]{RBMJ}}]\label{decomposition theorem}
Let $k$ be a positive integer not divisible by $p$.  Then, for each non-negative
integer $m$, we have
\begin{equation} \label{E:decompthm}
p^m B^{p^mk}(V) \oplus p^{m-1} B^{p^{m-1}k}(V)^{\otimes p} \oplus \dotsb \oplus B^k(V)^{\otimes p^m} \cong L^k(V^{\otimes p^m}).
\end{equation}
\end{thm}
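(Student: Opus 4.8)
The plan is to argue by induction on $m$. The case $m=0$ is the identity $B^k(V)\cong L^k(V)$, which holds because $p\nmid k$ forces $B^r(V)=L^r(V)$ (for $r$ prime to $p$ the module $L^r(V)$ is already a direct summand of $V^{\otimes r}$). Throughout I would use the decomposition theorem of \cite{RBMS}, which produces the $B$-modules and in particular yields, for any finite-dimensional $FG$-module $W$ and any $k$ with $p\nmid k$, a splitting $L^{pk}(W)\cong B^{pk}(W)\oplus L^p(L^k(W))$ (more generally $L^{p^ak}(W)\cong\bigoplus_{i=0}^{a}L^{p^i}(B^{p^{a-i}k}(W))$).

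The main work is the case $m=1$: for every $W$ and every $k$ with $p\nmid k$,
\[
L^k(W^{\otimes p})\ \cong\ p\,B^{pk}(W)\ \oplus\ B^k(W)^{\otimes p}.
\]
Using the splitting above to eliminate $B^{pk}(W)$, this is equivalent (by the Krull--Schmidt theorem) to the purely Lie-theoretic isomorphism
\[
L^k(W^{\otimes p})\ \oplus\ p\,L^p(L^k(W))\ \cong\ p\,L^{pk}(W)\ \oplus\ L^k(W)^{\otimes p}\qquad(p\nmid k),
\]
and this is what I would aim to prove directly. The idea is to work inside the tensor algebra $T(W)$, realising $L^k(W^{\otimes p})$ as the degree-$k$ component of the free Lie algebra on $W^{\otimes p}$ --- i.e.\ as the Lie subalgebra of $T(W)$ generated by $W^{\otimes p}$, spanned by the $k$-fold left-normed brackets whose entries are products of $p$ elements of $W$ --- and to exploit the extra symmetry of $W^{\otimes p}$: the cyclic group $C_p$ permuting the $p$ tensor factors acts commuting with the diagonal action of $G$, and on restriction to $C_p$ the module $W^{\otimes p}$ is the direct sum of a trivial part (on which $W$ reappears) and a free, hence projective, $FC_p$-part. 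This splitting is respected by the Lie idempotent cutting out $L^k$ when $p\nmid k$, so $L^k(W^{\otimes p})$ is a module for $G\times C_p$ whose $FC_p$-free part is an $F[G\times C_p]$-direct summand; restricted to $G$ that free part should be a sum of $p$ copies of one $FG$-module (the regular module $FC_p$ being $p$-dimensional), while the complement yields $B^k(W)^{\otimes p}=L^k(W)^{\otimes p}$. The uniqueness in \cite{RBMS} then forces the $p$ equal summands each to be $B^{pk}(W)$. Making this precise --- pinning the multiplicity down to exactly $p$, handling the $FC_p$-free/$FG$-projective part correctly, and ruling out extra summands --- is the step I expect to be the main obstacle.

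Granting the case $m=1$, the step $m-1\to m$ cannot be taken by a naive iteration: writing $V^{\otimes p^m}=(V^{\otimes p^{m-1}})^{\otimes p}$ and applying the case $m=1$ (or applying the inductive hypothesis with base module $V^{\otimes p}$) introduces the modules $B^{p^jk}(V^{\otimes p^{m-1}})$, which still have to be rewritten in terms of the $B^{p^ik}(V)$. I would therefore run a joint induction on $m$, proving Theorem~\ref{decomposition theorem} at level $m$ simultaneously with a formula expressing every $B^r(V^{\otimes p^m})$ --- equivalently, by the recursion of \cite{RBMS}, every $L^r(V^{\otimes p^m})$ --- in terms of the $B^{p^ik}(V)$, the two halves feeding into each other through that recursion. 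Conceptually, the theorem is the ``tensor'' counterpart of the ``Lie'' decomposition $L^{p^mk}(V)\cong\bigoplus_{j=0}^{m}L^{p^{m-j}}(B^{p^jk}(V))$ of \cite{RBMS}: in both, the $j$th summand is built from $B^{p^jk}(V)$, with $L^{p^{m-j}}(-)$ replaced by $p^j(-)^{\otimes p^{m-j}}$. Witt's formula \eqref{E:dim(L^r(V))} provides a running dimension check on all the bookkeeping, and a short computation with it already confirms the dimension identity underlying the theorem.
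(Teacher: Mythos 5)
The theorem you are asked to prove is not proved in this paper at all: it is quoted verbatim from Bryant and Schocker \cite[Theorem 4.2]{RBMJ}, so there is no in-paper argument to compare against. The relevant comparison is therefore with the argument in that reference, which proceeds through the machinery of \emph{Lie resolvents} in the Solomon descent algebra: the modules $B^r(V)$ are images of certain quasi-idempotent elements of $\mathbb{Q}\sym{r}$, and \cite[Theorem 4.2]{RBMJ} is obtained from a factorisation identity for those resolvents, pushed through the tensor power $V^{\otimes p^m k}$. Your proposal is a genuinely different route and, as it stands, it contains two substantial gaps.

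First, your reduction of the $m=1$ case to the Lie-theoretic identity
\[
L^k(W^{\otimes p})\ \oplus\ p\,L^p\bigl(L^k(W)\bigr)\ \cong\ p\,L^{pk}(W)\ \oplus\ L^k(W)^{\otimes p}\qquad(p\nmid k)
\]
via the recursion $L^{pk}(W)\cong B^{pk}(W)\oplus L^p(L^k(W))$ and Krull--Schmidt is clean and correct, and the dimension count does check out against Witt's formula \eqref{E:dim(L^r(V))}. But the argument you offer for this identity --- the $C_p$-action on $W^{\otimes p}$ permuting tensor factors, splitting $W^{\otimes p}$ into a trivial and a free $FC_p$-part and claiming this propagates through $\omega_k$ to give the multiplicity-$p$ summand --- is only a heuristic. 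The $FC_p$-decomposition of $W^{\otimes p}$ is not an $F[G\times C_p]$-decomposition (the ``diagonal'' piece $\{w^{\otimes p}\}$ is not even a $G$-subspace: the map $w\mapsto w^{\otimes p}$ is Frobenius-semilinear), and nothing in the sketch identifies the $FC_p$-free part of $L^k(W^{\otimes p})$, as an $FG$-module, with $p$ copies of $B^{pk}(W)$, nor shows the remainder is $L^k(W)^{\otimes p}$ rather than, say, $L^k(W')^{\otimes p}$ for a Frobenius twist $W'$ of $W$. You flag exactly these points yourself as ``the main obstacle''; they are not technicalities but the entire content of the $m=1$ case.

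Second, the inductive step is not carried out. You correctly observe that naive iteration fails: substituting $W=V^{\otimes p^{m-1}}$ in the $m=1$ case and expanding $B^k(V^{\otimes p^{m-1}})^{\otimes p}$ via the level-$(m-1)$ statement produces a $p$-fold tensor power of a direct sum, whose cross terms do not organise into the single summands $p^i\,B^{p^ik}(V)^{\otimes p^{m-i}}$. The ``joint induction'' you propose in response is a reasonable strategy, but you neither formulate the auxiliary statement (the claimed expression of each $B^r(V^{\otimes p^{m-1}})$ in terms of the $B^{p^jk}(V)$) nor verify that the two statements actually close up under the recursion of \cite{RBMS}. Until both of these gaps are filled the proposal is a plan rather than a proof; the published argument avoids both problems by working multiplicatively with the resolvents themselves rather than with isolated module isomorphisms.
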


For positive integers $n$ and $r$, let
$$
w(n,r) = \frac{1}{r} \sum_{d \mid r} \mu(d) n^{r/d}.
$$

\begin{lem} \label{L:maininequality}
For all $n$ and $r$, we have
$$
n^r/r - n^{r/2}/2 \leqslant w(n,r) \leqslant n^r/r.
$$
\end{lem}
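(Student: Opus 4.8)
The plan is to estimate the Witt sum $w(n,r) = \frac{1}{r}\sum_{d\mid r}\mu(d)n^{r/d}$ by isolating its dominant term, which is the $d=1$ contribution $n^r/r$, and bounding the total contribution of all remaining divisors. The upper bound is the easier half: among the divisors $d\mid r$ with $d>1$, the terms with $\mu(d)=1$ have the smallest exponents and one wants to discard them and keep only the $\mu(d)=-1$ terms, but since one merely wants $w(n,r)\leqslant n^r/r$ it suffices to observe that $\sum_{d\mid r,\,d>1}\mu(d)n^{r/d}\leqslant 0$. To see this, I would pair each squarefree divisor $d>1$ with $\mu(d)=1$ against a suitable divisor with $\mu=-1$ and a strictly larger summand, or more simply note that the largest proper term is $n^{r/q}$ where $q$ is the smallest prime divisor of $r$, this term carries sign $\mu(q)=-1$, and it dominates the sum of the absolute values of all the other proper terms because those have exponents at most $r/q - 1$ (their exponents being $r/d$ with $d>q$, hence $r/d\leqslant r/(q+1) < r/q$, so in fact $r/d\leqslant r/q-1$ as integers) and there are fewer than $n^{r/q-1}$ of them for $n\geqslant 2$; the case $n=1$ is trivial since then $w(1,r)=0$ for $r>1$ and $w(1,1)=1$, both satisfying the claimed bounds.

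For the lower bound $w(n,r)\geqslant n^r/r - n^{r/2}/2$, the key point is that the largest possible proper summand occurs when $r/d$ is as large as possible subject to $d>1$, i.e. $r/d\leqslant r/2$, so every term with $d>1$ satisfies $n^{r/d}\leqslant n^{r/2}$. Thus
$$
w(n,r) = \frac{n^r}{r} + \frac{1}{r}\sum_{\substack{d\mid r\\ d>1}}\mu(d)n^{r/d} \geqslant \frac{n^r}{r} - \frac{1}{r}\sum_{\substack{d\mid r\\ d>1}}n^{r/d}.
$$
It then remains to show $\sum_{d\mid r,\,d>1}n^{r/d}\leqslant \tfrac{r}{2}n^{r/2}$, and since there are at most $\tfrac{r}{2}$ divisors of $r$ that exceed $1$ (indeed at most $r-1$ such divisors, but one can afford to be generous) and each corresponding summand is at most $n^{r/2}$, this is immediate — though to get the constant $\tfrac12$ exactly one should be slightly more careful and note that the number of divisors $d$ of $r$ with $d>1$ is at most $r/2$ whenever $r\geqslant 2$ (as $r$ has at most $r$ divisors and $1,r$ are among them, and in fact the count is crudely bounded by $r/2$). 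When $r=1$ the sum over $d>1$ is empty and both inequalities read $n\leqslant w(n,1)=n\leqslant n$, which holds.

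The only mild subtlety — and the place I would be most careful — is pinning down the constant $1/2$ and the exponent $r/2$ simultaneously in the lower bound: one needs both that every proper divisor contributes an exponent at most $r/2$ (clear, since $d\geqslant 2$) and that the number of proper divisors is at most $r/2$. The latter is the genuine content; it follows because the divisors of $r$ lying in the interval $(1,r]$ are in bijection (via $d\mapsto r/d$) with those in $[1,r)$, and a counting argument shows $r$ has at most $r/2 + 1$ divisors in total for $r\geqslant 3$ (the divisors other than $1$ and $2$, if $2\mid r$, are at least $3$ and pair up), with the small cases $r=1,2$ checked by hand. Everything else is elementary arithmetic with no hidden obstacle.
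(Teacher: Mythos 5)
Your lower bound argument is correct and, in substance, parallels the paper's: you bound the tail $\sum_{d\mid r,\,d>1}n^{r/d}$ by (number of divisors $d>1$ of $r$) $\times$ (largest possible term $n^{r/2}$) $\leqslant (r/2)\cdot n^{r/2}$, whereas the paper observes that $rw(n,r)$ is the number of primitive words of length $r$ over an $n$-letter alphabet and that the non-primitive words number at most $(r/2)n^{r/2}$. These are the same count viewed algebraically versus combinatorially.

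The upper bound argument, however, has a genuine gap. You claim that $\sum_{d\mid r,\,d>1}\mu(d)n^{r/d}\leqslant 0$ because the term $-n^{r/q}$ (with $q$ the smallest prime divisor of $r$) ``dominates the sum of the absolute values of all the other proper terms,'' citing that those terms have exponents at most $r/q-1$ and that ``there are fewer than $n^{r/q-1}$ of them.'' But this reasoning does not produce the needed inequality: bounding the tail by (number of terms)$\times$(largest term) gives at best something on the order of $n^{r/q-1}\cdot n^{r/q-1}=n^{2r/q-2}$, which exceeds $n^{r/q}$ as soon as $r/q>2$. What makes the argument actually work is not the \emph{count} of terms but the fact that the exponents $r/d$ for distinct divisors $d>q$ are \emph{distinct} nonnegative integers, each at most $r/q-1$; hence the tail is dominated by the geometric series $\sum_{j=0}^{r/q-1}n^j=(n^{r/q}-1)/(n-1)<n^{r/q}$ for $n\geqslant 2$. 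Alternatively, the paper's route makes the upper bound immediate: $rw(n,r)$ is a count of words, hence nonnegative and at most $n^r$, with no estimation required. As written, your proposal is missing the key observation (distinct exponents or the combinatorial interpretation) that makes the upper bound go through.
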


\begin{proof}
The result is clear if $r=1$, so we assume $r \geqslant 2$.  By \cite[Theorem 7.1]{CR}, $rw(n,r)$ is the number of words of length $r$ on an alphabet of cardinality $n$ that cannot be written as a power of a shorter word.  Clearly $rw(n,r) \leqslant n^r$.  If $u$ is a word of length $r$ of the form $v^d$, where $d > 1$, then $v$ has length at most $r/2$.  Hence, for a given $d$, the number of possibilities for $v$ is at most $n^{r/2}$.  The number of possibilities for $d$ is at most $r/2$. Therefore $n^r - rw(n,r) \leqslant (r/2)n^{r/2}$, which gives the required result.
\end{proof}

Let $n = \dim(V)$ and suppose that $n \geqslant 2$.  By \eqref{E:dim(L^r(V))}, we have $\dim L^r(V) = w(n,r)$.  For each $r \geqslant 1$, we define $$b_r = \frac{\dim B^r(V)}{\dim L^r(V)}=\frac{\dim B^r(V)}{w(n,r)}.$$  Since $B^r(V)$ is a submodule of $L^r(V)$ we have $0 \leqslant b_r \leqslant 1$.  In order to prove Theorem \ref{T:main1}, we need to prove that $b_r \to 1$ as $r \to \infty$ with $r \in \mathcal{A}$.  Note that for $r \in \mathcal{A}$ we can write $r = p^m k$ where $m \geqslant 0$, $p \nmid k$ and $k \geqslant 2$.  Suppose, for the rest of this section, that $m$ and $k$ satisfy these conditions.  By direct computations of the dimensions in \eqref{E:decompthm}, we have
\begin{equation} \label{E:eq}
\sum_{i=0}^m p^{m-i}(b_{p^{m-i}k})^{p^i} w(n,p^{m-i}k)^{p^i} = w(n^{p^m}, k).
\end{equation} For $i = 0,1, \dotsc, m$, define
$$
a_i = \frac{w(n, p^{m-i}k)^{p^i}}{p^i w(n,p^m k)}.
$$
Then, on dividing \eqref{E:eq} by $p^m w(n, p^m k)$ and re-arranging, we obtain
\begin{equation} \label{E:eq2}
b_{p^m k} = \frac{w(n^{p^m},k)}{p^m w(n, p^m k)} - \sum_{i=1}^m a_i (b_{p^{m-i}k})^{p^i}.
\end{equation}

\begin{lem} \label{L:<=1}
Suppose that $0 < s \leqslant i \leqslant m$.  Then
$$
\frac{a_i}{a_{i-s}} \leqslant p^{-s} \left(\frac{2p^s}{(p^{m-i}k)^{p^s -1}} \right)^{p^{i-s}}.
$$
\end{lem}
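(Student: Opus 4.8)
The plan is to work directly from the definition $a_i = w(n,p^{m-i}k)^{p^i}/(p^i w(n,p^m k))$ and form the ratio $a_i/a_{i-s}$, in which the common denominator factor $w(n,p^m k)$ cancels immediately. What remains is
$$
\frac{a_i}{a_{i-s}} = \frac{w(n,p^{m-i}k)^{p^i}}{w(n,p^{m-i+s}k)^{p^{i-s}}}\cdot p^{-s},
$$
so the whole task is to bound $w(n,p^{m-i}k)^{p^i}/w(n,p^{m-i+s}k)^{p^{i-s}}$ by $\bigl(2p^s/(p^{m-i}k)^{p^s-1}\bigr)^{p^{i-s}}$. The natural idea is to pull the exponent $p^{i-s}$ outside, reducing the problem to showing
$$
\frac{w(n,p^{m-i}k)^{p^s}}{w(n,p^{m-i+s}k)} \leqslant \frac{2p^s}{(p^{m-i}k)^{p^s-1}},
$$
after which raising both sides to the power $p^{i-s}$ and multiplying by $p^{-s}$ gives exactly the claimed inequality. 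So everything hinges on this one inequality comparing $w(n,q)^{p^s}$ with $w(n,p^s q)$, where I have set $q = p^{m-i}k \geqslant 2$ (note $q \geqslant 2$ since $k \geqslant 2$).

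To prove that, I would feed in Lemma \ref{L:maininequality} on both sides. For the numerator I use the upper bound $w(n,q) \leqslant n^q/q$, so $w(n,q)^{p^s} \leqslant n^{q p^s}/q^{p^s}$. For the denominator I use the lower bound $w(n,p^s q) \geqslant n^{p^s q}/(p^s q) - n^{p^s q/2}/2$. The exponents match up nicely: the $n$-power in the numerator bound is $n^{p^s q}$, the same as the leading $n$-power downstairs, so these cancel and one is left with a comparison of the shape
$$
\frac{w(n,q)^{p^s}}{w(n,p^s q)} \leqslant \frac{n^{p^s q}/q^{p^s}}{n^{p^s q}/(p^s q) - n^{p^s q/2}/2} = \frac{1/q^{p^s}}{1/(p^s q) - n^{p^s q/2}/(2 n^{p^s q})} = \frac{p^s q / q^{p^s}}{1 - (p^s q)/(2 n^{p^s q/2})}.
$$
Then $p^s q/q^{p^s} = p^s/q^{p^s-1} = p^s/(p^{m-i}k)^{p^s-1}$, which is exactly half the target bound, so it remains to check that the correction factor $1/\bigl(1 - (p^s q)/(2n^{p^s q/2})\bigr) \leqslant 2$, i.e. that $(p^s q)/(2 n^{p^s q/2}) \leqslant 1/2$, i.e. $p^s q \leqslant n^{p^s q/2}$. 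Since $n \geqslant 2$, it suffices that $p^s q \leqslant 2^{p^s q/2}$, and writing $t = p^s q \geqslant 2$ this is $t \leqslant 2^{t/2}$, which holds for all integers $t \geqslant 2$ (indeed $2^{t/2} = (\sqrt 2)^t \geqslant t$ for $t \geqslant 2$, an elementary fact one can verify by checking $t = 2, 3, 4$ and then using that $(\sqrt 2)^t$ eventually dominates, or more simply $2^{t/2} \geqslant 2^{\lfloor t/2\rfloor} \geqslant \lfloor t/2 \rfloor + 1 > t/2$, and a slightly more careful version gives $\geqslant t$).

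The only real subtlety — and where I would be most careful — is making sure the denominator lower bound $n^{p^s q}/(p^s q) - n^{p^s q/2}/2$ is genuinely positive before dividing by it and flipping the inequality; this is again exactly the condition $p^s q \leqslant n^{p^s q/2}$ (in fact strictly, which holds since $t < 2^{t/2}$ for $t \geqslant 2$ except one should just check $t=2$ gives $2 = 2$, so one needs the weak form and a tiny bit of slack elsewhere — the factor $2$ in the target bound absorbs this comfortably). Apart from that, the argument is a purely mechanical substitution of the two halves of Lemma \ref{L:maininequality} followed by the elementary estimate $t \leqslant 2^{t/2}$, and the bookkeeping of the exponents $p^i = p^s \cdot p^{i-s}$; no further ideas are needed.
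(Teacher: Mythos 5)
Your route is the same as the paper's: cancel the common factor $w(n,p^mk)$, pull the exponent $p^{i-s}$ outside, and bound $w(n,p^{m-i}k)^{p^s}/w(n,p^{m-i+s}k)$ by feeding in both halves of Lemma~\ref{L:maininequality}. The algebra and the reduction to the elementary inequality $t \leqslant n^{t/2}$ with $t = p^sq = p^{m-i+s}k$ (so that the correction factor $1/\bigl(1 - t/(2n^{t/2})\bigr) \leqslant 2$) are all exactly what the paper does, with $q$ named slightly differently ($q=p^{m-i}k$ versus the paper's $q=p^{m-i+s}k$).

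There is, however, a genuine arithmetic slip in the final elementary step. You assert that $t \leqslant 2^{t/2}$ holds for all integers $t \geqslant 2$, and even invite yourself to ``verify by checking $t=2,3,4$''. It fails at $t=3$: $2^{3/2}\approx 2.83 < 3$. (It holds at $t=2$ and again for all $t\geqslant 4$.) More to the point, your justification that $t\geqslant 2$ is obtained only from $q = p^{m-i}k \geqslant 2$, which gives $t = p^sq \geqslant 2p$, i.e.\ $t\geqslant 4$ when $p=2$. In fact one can do better: since $p \nmid k$ and $k\geqslant 2$, for $p=2$ we have $k$ odd and hence $k\geqslant 3$, so $q\geqslant 3$ and $t\geqslant 6$; for $p\geqslant 3$ already $t\geqslant p\cdot 2 \geqslant 6$. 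So $t\geqslant 6$ in all cases, which is exactly the observation the paper makes (``$q\geqslant 6$''), and for $t\geqslant 4$ the needed $t\leqslant 2^{t/2}$ is correct. So: replace the false claim ``for all $t\geqslant 2$'' by the correct lower bound $t\geqslant 6$ (or at least $t\geqslant 4$) together with its justification, and the proof is complete and matches the paper's.
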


\begin{proof}
We have
$$
 \frac{a_i}{a_{i-s}} = \frac{w(n, p^{m-i}k)^{p^i}}{p^s w(n, p^{m-i+s} k)^{p^{i-s}}} = p^{-s} \left( \frac{w(n, p^{m-i}k)^{p^s}}{w(n, p^{m-i+s} k)} \right)^{p^{i-s}}.
$$
Write $q = p^{m-i+s}k$.  Then, by Lemma \ref{L:maininequality},
$$
\frac{w(n,p^{m-i}k)^{p^s}}{w(n,p^{m-i+s}k)} \leqslant
\frac{(p^{m-i}k)^{-p^s}n^q}{ n^q/q - n^{q/2}/2} =
\frac{(p^{m-i}k)^{-p^s}qn^{q/2}}{n^{q/2} - q/2}.
$$
Since $n \geqslant 2$ and $q \geqslant 6$, we have $n^{q/2}/(n^{q/2} - q/2) \leqslant 2$.  Thus
$$
\frac{w(n,p^{m-i}k)^{p^s}}{w(n,p^{m-i+s}k)} \leqslant
2(p^{m-i}k)^{-p^s} p^{m-i} kp^s = \frac{2p^s}{(p^{m-i}k)^{p^s-1}}.
$$
This gives the required result.
\end{proof}

\begin{cor} \label{C:<=1}
Suppose that $m \geqslant 1$.
\begin{enumerate}
  \item[(i)] For $i = 2,3, \dotsc, m-1$, we have $a_i/a_{i-1} \leqslant 1$.
  \item[(ii)] We have $a_1 \leqslant 2/(p^{m-1}k)^{p-1}$ and $a_m \leqslant 2/k^{p^m-1}$.
\end{enumerate}
\end{cor}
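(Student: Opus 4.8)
The plan is to obtain both parts of the corollary as immediate consequences of Lemma \ref{L:<=1}, after recording the trivial fact that $a_0 = 1$: indeed $a_0 = w(n,p^mk)/w(n,p^mk) = 1$ straight from the definition. Since the corollary assumes $m \geqslant 1$, all the choices of parameters $s$ and $i$ below satisfy $0 < s \leqslant i \leqslant m$, so Lemma \ref{L:<=1} is applicable.

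For part (i) I would take $s = 1$ in Lemma \ref{L:<=1}, which gives
$$
\frac{a_i}{a_{i-1}} \leqslant p^{-1}\left(\frac{2p}{(p^{m-i}k)^{p-1}}\right)^{p^{i-1}}
$$
for each $i$ with $1 \leqslant i \leqslant m$. When $2 \leqslant i \leqslant m-1$ we have $m - i \geqslant 1$, so $p^{m-i}k \geqslant pk \geqslant 2p$, using $k \geqslant 2$; since $p - 1 \geqslant 1$ this yields $(p^{m-i}k)^{p-1} \geqslant (2p)^{p-1} \geqslant 2p$, so the bracketed quantity is at most $1$. Raising it to the positive power $p^{i-1}$ keeps it at most $1$, and the factor $p^{-1}$ is also at most $1$, so $a_i/a_{i-1} \leqslant 1$.

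For part (ii), the bound on $a_1$ is Lemma \ref{L:<=1} with $s = i = 1$ (together with $a_0 = 1$), which reads $a_1 = a_1/a_0 \leqslant p^{-1}\cdot 2p/(p^{m-1}k)^{p-1} = 2/(p^{m-1}k)^{p-1}$; and the bound on $a_m$ is Lemma \ref{L:<=1} with $s = i = m$ (again using $a_0 = 1$), which reads $a_m = a_m/a_0 \leqslant p^{-m}\cdot 2p^m/k^{p^m-1} = 2/k^{p^m-1}$.

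I do not expect any genuine obstacle here; the only thing needing attention is the elementary estimate $(p^{m-i}k)^{p-1} \geqslant 2p$ used in (i), together with the observation that it fails when $i = m$, where the base is just $k$, which can be as small as $3$ in characteristic $2$. This is precisely why the range in (i) stops at $m - 1$ and why $a_m$ is instead controlled by the separate, weaker bound in (ii).
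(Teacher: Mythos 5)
Your proof is correct and follows essentially the same route as the paper: both parts are read off from Lemma \ref{L:<=1} (with $s=1$ for (i) and $s=i\in\{1,m\}$ for (ii), using $a_0=1$), and in (i) the right-hand side is then bounded by $1$ using $m-i\geqslant 1$ and $k\geqslant 2$. Your intermediate estimate in (i) is a bit more direct than the chain of inequalities printed in the paper, but the idea and the ingredients are identical.
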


\begin{proof} \hfill
\begin{enumerate}
  \item[(i)] By Lemma \ref{L:<=1} with $s=1$, we have
  $$
  \frac{a_i}{a_{i-1}} \leqslant p^{-1}\left( \frac{2p}{(p^{m-i}k)^{p-1}} \right)^{p^{i-1}} \leqslant \left( \frac{2}{p^{(m-i)(p-1)}} \right)^{p^i-1} \leqslant 1.
  $$
  \item[(ii)] By Lemma \ref{L:<=1} with $i=s=1$, we have
  $$
  a_1 = \frac{a_1}{a_0} \leqslant p^{-1} \frac{2p}{(p^{m-1}k)^{p-1}} = \frac{2}{(p^{m-1}k)^{p-1}}.
  $$
  Similarly, with $i = s = m$,
  $$
    a_m = \frac{a_m}{a_0} \leqslant p^{-m} \frac{2p^m}{k^{p^m-1}} = \frac{2}{k^{p^m-1}}.
$$
\end{enumerate}
\end{proof}

\begin{lem} \label{L:>=}
We have
$$
\frac{w(n^{p^m}, k)}{p^m w(n,p^m k)} \geqslant 1 - \frac{k}{2n^{p^m k/2}}.
$$
\end{lem}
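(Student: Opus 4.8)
The plan is to obtain the asserted inequality directly from the two-sided estimate of Lemma~\ref{L:maininequality}, used once on the numerator and once on the denominator of $w(n^{p^m},k)/(p^m w(n,p^m k))$. Since we want a \emph{lower} bound for this ratio, I would bound the numerator from below and the denominator from above.

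First I would handle the denominator. The upper bound in Lemma~\ref{L:maininequality}, applied with $p^m k$ in place of $r$, gives $w(n,p^m k)\leqslant n^{p^m k}/(p^m k)$, and hence $p^m w(n,p^m k)\leqslant n^{p^m k}/k$. Because $w(n^{p^m},k)=\dim L^k(V')$ for a module $V'$ of dimension $n^{p^m}$ and is therefore non-negative (in fact positive, as $n^{p^m}\geqslant 2$ and $k\geqslant 2$), dividing through gives
$$
\frac{w(n^{p^m},k)}{p^m w(n,p^m k)}\geqslant \frac{k\,w(n^{p^m},k)}{n^{p^m k}}.
$$
Next I would bound the numerator from below, using the lower bound in Lemma~\ref{L:maininequality} with $n^{p^m}$ in place of $n$ and $k$ in place of $r$:
$$
w(n^{p^m},k)\geqslant \frac{(n^{p^m})^k}{k}-\frac{(n^{p^m})^{k/2}}{2}=\frac{n^{p^m k}}{k}-\frac{n^{p^m k/2}}{2}.
$$
Substituting this into the previous display yields
$$
\frac{w(n^{p^m},k)}{p^m w(n,p^m k)}\geqslant \frac{k}{n^{p^m k}}\left(\frac{n^{p^m k}}{k}-\frac{n^{p^m k/2}}{2}\right)=1-\frac{k}{2n^{p^m k/2}},
$$
which is exactly the claimed bound.

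There is no real obstacle here: the lemma is a one-line consequence of Lemma~\ref{L:maininequality}. The only points needing attention are keeping the directions of the two estimates straight (a lower bound for $w(n^{p^m},k)$ against an upper bound for $p^m w(n,p^m k)$) and noting that $w(n^{p^m},k)\geqslant 0$ so that chaining the two inequalities through the positive denominator is valid; alternatively one can observe directly that $n^{p^m k}/k-n^{p^m k/2}/2\geqslant 0$, since $n\geqslant 2$ forces $2n^{p^m k/2}\geqslant 2\cdot 2^{k/2}\geqslant k$ for $k\geqslant 2$. The case $m=0$ is trivially covered, since then the left-hand side equals $1$.
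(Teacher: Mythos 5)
Your proposal is correct and is essentially the same argument as the paper's: both bound the numerator from below and the denominator from above via Lemma~\ref{L:maininequality} and simplify. The paper does this in a single display; your extra remarks about positivity and the trivial $m=0$ case are harmless elaborations of the same computation.
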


\begin{proof}
By Lemma \ref{L:maininequality}, we have
$$
\frac{w(n^{p^m}, k)}{p^m w(n,p^m k)} \geqslant \frac{n^{p^m k}/k - n^{p^m k/2}/2}{p^m n^{p^mk}/(p^m k)} = 1 - \frac{k}{2n^{p^m k/2}}.
$$
\end{proof}

\begin{proof}[Proof of Theorem \ref{T:main1}]
Suppose that $m \geqslant 1$.  Recall that $b_r \leqslant 1$ for all $r$.  By \eqref{E:eq2} and Corollary \ref{C:<=1}(i), we have
$$
1\geqslant b_{p^m k} \geqslant w(n^{p^m}, k)/(p^m w(n,p^m k)) - (m-1)a_1 - a_m.
$$
Hence, by Corollary \ref{C:<=1}(ii) and Lemma \ref{L:>=},
$$
1\geqslant b_{p^mk} \geqslant 1 - \frac{k}{2n^{p^mk/2}} - \frac{2(m-1)}{(p^{m-1}k)^{p-1}} - \frac{2}{k^{p^m-1}}.
$$
Also, for $m = 0$, we have $b_k = w(n,k)/w(n,k) = 1$.  Thus $b_r \to 1$ as $r \to \infty$ with $r \in \mathcal{A}$.
\end{proof}

\section{Lie modules}\label{lie module section}

In this section we shall prove Theorem \ref{T:main2}.

In order to prove the theorem we shall apply the Schur functor to the modules appearing in a special case of the isomorphism \eqref{E:decompthm}. Since the Schur functor, in its usual form, requires the field $F$ to be infinite, we begin by showing that it is enough to prove Theorem \ref{T:main2} in the case where $F$ is infinite.

Let $G$ be a finite group and let $M$ be a finite-dimensional $FG$-module.  We write $M^{\max}$ for a maximal projective submodule of $M$.  Thus $M^{\max}$ is a maximal projective direct summand of $M$ and is uniquely determined up to isomorphism.  Let $K$ be an extension field of $F$.  Then $K \otimes M$ and $K \otimes M^{\max}$ are $KG$-modules.

\begin{lem} \label{L:fieldex}
Suppose that $F$ is a splitting field for $G$.  Then $K \otimes M^{\max} \cong (K \otimes M)^{\max}$.
\end{lem}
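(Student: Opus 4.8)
The plan is to reduce the statement to two standard facts: that scalar extension commutes with taking a maximal projective summand because (a) projectivity is preserved and reflected by a field extension, and (b) the multiplicity of each projective indecomposable in a module can be detected homologically in a way that is insensitive to the extension. First I would write $M \cong M^{\max} \oplus N$, where $N$ has no nonzero projective direct summand; applying $K \otimes -$ gives $K \otimes M \cong (K \otimes M^{\max}) \oplus (K \otimes N)$. Since $M^{\max}$ is projective and projectivity is preserved under arbitrary field extensions, $K \otimes M^{\max}$ is a projective $KG$-module. Hence it is contained (up to isomorphism, as a direct summand) in $(K \otimes M)^{\max}$, and it remains only to prove the reverse: that $K \otimes N$ has no nonzero projective $KG$-summand, equivalently that $(K \otimes M)^{\max} \cong K \otimes M^{\max}$.

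The key step is therefore to show that $K \otimes N$ is projective-free. Here I would use the hypothesis that $F$ is a splitting field for $G$. Because $F$ splits $G$, the indecomposable projective $FG$-modules $P_1, \dotsc, P_t$ remain indecomposable after extending to $K$ (their endomorphism rings are local with residue field $F$, so $\End_{KG}(K \otimes P_j) \cong K \otimes \End_{FG}(P_j)$ is still local), and $\{K \otimes P_1, \dotsc, K \otimes P_t\}$ is a complete list of the indecomposable projective $KG$-modules, since the simple $KG$-modules are exactly $K \otimes S$ for $S$ simple over $F$. Thus the multiplicity of $K \otimes P_j$ as a summand of $K \otimes N$ equals $\dim_K \Hom_{KG}(K \otimes P_j,\, K \otimes N) - (\text{contribution from }\operatorname{rad})$; more cleanly, one computes $[K \otimes N : K \otimes P_j]$ via the functor $\Hom_{KG}(-, K \otimes S_j)$ applied to a projective cover, and $\Hom_{KG}(K \otimes P_j, K \otimes N) \cong K \otimes \Hom_{FG}(P_j, N)$ because $P_j$ is finitely presented (indeed finite-dimensional). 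Combining these, the multiplicity of $K \otimes P_j$ in $K \otimes N$ equals the multiplicity of $P_j$ in $N$, which is $0$. Hence $K \otimes N$ is projective-free, and the two decompositions of $K \otimes M$ must agree by Krull--Schmidt over $KG$.

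The main obstacle is the bookkeeping needed to pin down that scalar extension takes the indecomposable projectives over $F$ to the full set of indecomposable projectives over $K$, and does so multiplicity-preservingly; this is exactly where the splitting-field hypothesis is essential, since without it $K \otimes P_j$ could decompose further and the correspondence would fail. Everything else — flatness of $K$ over $F$ so that $K \otimes -$ commutes with finite direct sums and with $\Hom$ out of finite-dimensional modules, and Krull--Schmidt for finite-dimensional $KG$-modules — is routine. I would present the argument in the order: (1) $K \otimes M^{\max}$ is projective, so it is a summand of $(K\otimes M)^{\max}$; (2) splitting field $\Rightarrow$ scalar extension gives a multiplicity-preserving bijection on indecomposable projectives; (3) therefore $K \otimes N$ is projective-free; (4) conclude by Krull--Schmidt.
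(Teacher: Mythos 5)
Your overall strategy shares the paper's key ingredient — the splitting-field hypothesis gives a multiplicity-preserving bijection between projective indecomposable $FG$-modules and projective indecomposable $KG$-modules (the paper cites \cite[Theorem 10.18]{BHNB} for exactly this) — and your step (1), that $K\otimes M^{\max}$ is projective and hence a summand of $(K\otimes M)^{\max}$, matches the paper's opening move. But the two proofs diverge on how to get the reverse containment, and your route has a gap. You propose to show $K\otimes N$ is projective-free by computing the multiplicity of $K\otimes P_j$ via $\operatorname{Hom}_{KG}(K\otimes P_j, K\otimes N)\cong K\otimes\operatorname{Hom}_{FG}(P_j,N)$. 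The trouble is that $\dim\operatorname{Hom}_{FG}(P_j,N)$ counts composition factors of $N$ isomorphic to the top of $P_j$, not direct summands isomorphic to $P_j$; a projective-free $N$ will typically have $\operatorname{Hom}_{FG}(P_j,N)\neq 0$. Your hedges (``minus the contribution from $\operatorname{rad}$,'' ``via $\operatorname{Hom}_{KG}(-,K\otimes S_j)$ applied to a projective cover'') gesture at the right correction but never pin down a formula that actually isolates the direct-summand multiplicity, so as written the key step (3) does not go through. It can be repaired — the correct invariant is the rank of the pairing $\operatorname{Hom}(N,P_j)\times\operatorname{Hom}(P_j,N)\to\End(P_j)/\operatorname{rad}\cong F$, which does commute with base change when $F$ splits $G$ — but that is more delicate than what you wrote.

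The paper sidesteps this entirely. Instead of working with the projective-free complement $N$, it runs the argument on the other side: the bijection on PIMs lets it write $(K\otimes M)^{\max}\cong K\otimes Q$ for some projective $FG$-module $Q$; then the Noether--Deuring theorem (\cite[Theorem 1.21]{BHNB}: $K\otimes Q\mid K\otimes M$ implies $Q\mid M$) gives $Q\mid M$, hence $Q\mid M^{\max}$, hence $(K\otimes M)^{\max}\mid K\otimes M^{\max}$. This descent-plus-Noether--Deuring route avoids any multiplicity bookkeeping and is the natural fix for your argument: rather than trying to show $K\otimes N$ has no projective summands by homological counting, descend the putative extra projective summand to $F$ and let Noether--Deuring place it inside $M$, contradicting the maximality of $M^{\max}$.
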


\begin{proof}
For modules $U$ and $V$, we write $U \mid V$ to denote that $U$ is isomorphic to a direct summand of $V$.  Clearly $K \otimes M^{\max}$ is projective and $K \otimes M^{\max} \mid K \otimes M$.  Thus $K \otimes M^{\max} \mid (K \otimes M)^{\max}$.  Since $F$ and $K$ are splitting fields for $G$, it follows from \cite[Theorem 10.18]{BHNB} that if $P$ is a projective indecomposable $FG$-module then $K \otimes P$ is a projective indecomposable $KG$-module and, furthermore, every projective indecomposable $KG$-module is isomorphic to some such $K \otimes P$.  Hence there is a projective $FG$-module $Q$ such that $K \otimes Q \cong (K \otimes M)^{\max}$.  In particular, $K \otimes Q \mid K \otimes M$ and it follows, by \cite[Theorem 1.21]{BHNB}, that $Q \mid M$.  Thus $Q \mid M^{\max}$ and so $(K \otimes M)^{\max} \mid K \otimes M^{\max}$.  The result follows.
\end{proof}

In Section \ref{S:intro} we defined the $F\sym{r}$-module $\Lie(r)$ for each positive integer $r$.  Let $\Lie_K(r)$ denote the $K\sym{r}$-module defined in the same way over $K$.  Thus $K \otimes \Lie(r) \cong \Lie_K(r)$.  As in Section \ref{S:intro}, let $\Liemax(r)$ be a maximal projective submodule of $\Lie(r)$.  It is well-known that every field is a splitting field for $\sym{r}$ (see \cite[Theorem 11.5]{GJ}).  Thus, by Lemma \ref{L:fieldex}, $K \otimes \Liemax(r)$ is isomorphic to a maximal projective submodule of $\Lie_K(r)$.  Consequently, if Theorem \ref{T:main2} holds over $K$, it holds over $F$.  Therefore it suffices to prove Theorem \ref{T:main2} in the case where $F$ is infinite.  Hence, from now on, we take $F$ to be infinite.

Suppose that $n \geqslant r$ and let $E_n$ be the $n$-dimensional natural module for the general linear group $\GL_n(F)$ with standard basis $\{e_1,\dotsc, e_n \}$.  We identify $\sym{r}$ with the subgroup of $\GL_n(F)$ consisting of all those elements which permute $\{ e_1, \dotsc, e_r\}$ and fix $e_{r+1}, \dotsc, e_n$ pointwise.

Let $\Lambda(n,r)$ be the set of all $n$-tuples $(\alpha_1,\dotsc, \alpha_n)$ of non-negative integers such that $\alpha_1 + \dotsb + \alpha_n = r$.  The elements of $\Lambda(n,r)$ are called weights.  For $t_1,\dotsc, t_n \in F^{\times} = F \setminus \{0\}$ let $d(t_1,\dotsc, t_n)$ denote the diagonal matrix in $\GL_n(F)$ with entries $t_1, \dotsc, t_n$ down the diagonal.

Let $M$ be a finite-dimensional $F\GL_n(F)$-module such that $M$ is a homogeneous polynomial module of degree $r$ (see \cite{G}).  For $\alpha = (\alpha_1,\dotsc, \alpha_n) \in \Lambda(n,r)$ the $\alpha$-weight space of $M$ is denoted by $M^{\alpha}$ and is the subspace of $M$ defined by
$$M^{\alpha} = \{ u \in M \mid u \cdot d(t_1,\dotsc, t_n) = t_1^{\alpha_1} \dotsm t_n^{\alpha_n} u,\,\, \forall t_1,\dotsc, t_n \in F^{\times} \}.
$$
By \cite[(3.2c)]{G}, we have
\begin{equation} \label{E:wtspace}
  M = \bigoplus_{\alpha \in \Lambda(n,r)} M^{\alpha}.
\end{equation}

Let $\omega_r \in \Lambda(n,r)$ be defined by $\omega_r = (1,\dotsc, 1, 0, \dotsc, 0)$ where there are $r$ entries equal to 1 and $n-r$ entries equal to 0.  For $\sigma \in \sym{r} \leqslant \GL_n(F)$ we have
$$
\sigma d(t_1,\dotsc, t_n) \sigma^{-1} = d(t_{1\sigma},\dotsc, t_{r\sigma}, t_{r+1},\dotsc, t_n).
$$
It follows that $M^{\omega_r}$ is invariant under the restriction to $\sym{r}$ of the action of $\GL_n(F)$ on $M$.  Hence $M^{\omega_r}$ is an $F\sym{r}$-module.

Let $\mathrm{mod}_F(n,r)$ be the class of all finite-dimensional homogeneous polynomial $F\GL_n(F)$-modules of degree $r$ and let $\mathrm{mod}(F\sym{r})$ be the class of all finite-dimensional $F\sym{r}$-modules.  Let
$$
f_r : \mathrm{mod}_F(n,r) \to \mathrm{mod}(F\sym{r})
$$
be the map defined by $f_r(M) = M^{\omega_r}$ for all $M \in \mathrm{mod}_F(n,r)$.  This map $f_r$ is called the `Schur functor'.  (It is an exact functor between the two module categories: see \cite[Chapter 6]{G}.)  It is easy to verify that
\begin{equation} \label{E:directsum}
f_r(M \oplus N) = f_r(M) \oplus f_r(N)
\end{equation}
for all $M, N \in \mathrm{mod}_F(n,r)$.  Furthermore, it is clear from the definition of the Lie module in Section \ref{S:intro} that
$$
f_r(L^r(E_n)) = \Lie(r),
$$ for all $n$ such that $n\geqslant r$.

Let $r_1,\dotsc, r_l$ be positive integers such that $r_1 +\dotsb + r_l = r$.  We may regard $\sym{r}$ as the group of all permutations of a set of cardinality $r$ written as the disjoint union of sets of cardinalities $r_1,\dotsc, r_l$.  Thus we may regard $\sym{r_1}\times \dotsb \times \sym{r_l}$ as a subgroup of $\sym{r}$.  The following lemma is an immediate consequence of a slightly stronger version given in \cite{SDKE}.

\begin{lem}[{\cite[\S2.5, Lemma]{SDKE}}] \label{L:DE}
Let $n \geqslant r$ and let $r_1,\dotsc, r_l$ be positive integers such that $r_1+ \dotsb + r_l = r$.  For $i=1,\dotsc, l$, let $M_i \in \mathrm{mod}_F(n,r_i)$.  Then
$$f_r(M_1 \otimes \dotsb \otimes M_l) \cong \Ind_{\sym{r_1}\times \dotsb \times \sym{r_l}}^{\sym{r}} ( f_{r_1}(M_1)\boxtimes \dotsb \boxtimes f_{r_l}(M_l)).
$$
\end{lem}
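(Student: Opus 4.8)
The plan is to deduce the lemma from the weight--space decomposition \eqref{E:wtspace} together with the standard recognition criterion for induced modules. Throughout, regard $\sym{r}$ and each $\sym{r_i}$ as subgroups of $\GL_n(F)$ as above, and for a subset $S$ of $\{1,\dotsc,n\}$ let $\mathbf{1}_S\in\Lambda(n,|S|)$ denote the weight whose $j$th entry is $1$ for $j\in S$ and $0$ otherwise, so that $\omega_{r_i}=\mathbf{1}_{\{1,\dotsc,r_i\}}$. Put $c_i=r_1+\dotsb+r_{i-1}$ and $B_i=\{c_i+1,\dotsc,c_i+r_i\}$, so that $(B_1,\dotsc,B_l)$ is the ``standard'' ordered partition of $\{1,\dotsc,r\}$ into blocks of sizes $r_1,\dotsc,r_l$, and the Young subgroup $\sym{r_1}\times\dotsb\times\sym{r_l}$, regarded as a subgroup $Y$ of $\sym{r}$, is precisely the subgroup of $\sym{r}$ preserving each $B_i$.

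First I would prove the decomposition
\[
f_r(M_1\otimes\dotsb\otimes M_l)=(M_1\otimes\dotsb\otimes M_l)^{\omega_r}=\bigoplus_{(S_1,\dotsc,S_l)}M_1^{\mathbf{1}_{S_1}}\otimes\dotsb\otimes M_l^{\mathbf{1}_{S_l}},
\]
the sum running over all ordered partitions $(S_1,\dotsc,S_l)$ of $\{1,\dotsc,r\}$ with $|S_i|=r_i$. This is immediate from \eqref{E:wtspace}: a tensor $u_1\otimes\dotsb\otimes u_l$ with $u_i\in M_i^{\beta_i}$ lies in the $(\beta_1+\dotsb+\beta_l)$-weight space, so $f_r(M_1\otimes\dotsb\otimes M_l)$ is the sum of the $M_1^{\beta_1}\otimes\dotsb\otimes M_l^{\beta_l}$ over all $(\beta_1,\dotsc,\beta_l)$ with $\beta_i\in\Lambda(n,r_i)$ and $\sum_i\beta_i=\omega_r$; and since every entry of $\omega_r$ is $0$ or $1$, these tuples are exactly the $(\mathbf{1}_{S_1},\dotsc,\mathbf{1}_{S_l})$ coming from ordered partitions as described.

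Next I would analyse the $\sym{r}$-action. From the conjugation formula for $d(t_1,\dotsc,t_n)$ one checks that $M_i^{\mathbf{1}_S}\cdot\pi=M_i^{\mathbf{1}_{S\pi}}$ for $\pi\in\sym{r}$, so $\pi$ carries the summand indexed by $(S_1,\dotsc,S_l)$ onto the one indexed by $(S_1\pi,\dotsc,S_l\pi)$. Hence $\sym{r}$ permutes the summands transitively, and the stabiliser of the summand $W_0:=M_1^{\mathbf{1}_{B_1}}\otimes\dotsb\otimes M_l^{\mathbf{1}_{B_l}}$ is exactly $Y$. By the usual recognition criterion for induced modules --- a module that is a direct sum of subspaces permuted transitively by the group, one of them stabilised by a subgroup $H$ with $H$ its full stabiliser, is isomorphic to the induction from $H$ of that subspace --- we obtain $f_r(M_1\otimes\dotsb\otimes M_l)\cong\Ind_Y^{\sym{r}}W_0$ as $F\sym{r}$-modules.

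It remains to identify $W_0$, as an $FY$-module, with $f_{r_1}(M_1)\boxtimes\dotsb\boxtimes f_{r_l}(M_l)$, and this bookkeeping is where I expect the only real work to lie. The key observation is that for $j\neq i$ the $i$th factor $\sym{r_i}$ of $Y$ acts trivially on $M_j^{\mathbf{1}_{B_j}}$: its elements fix $e_k$ for every $k\in B_j$, hence fix every basis tensor $e_{k_1}\otimes\dotsb\otimes e_{k_{r_j}}$ with $k_1,\dotsc,k_{r_j}\in B_j$, and such tensors span $(E_n^{\otimes r_j})^{\mathbf{1}_{B_j}}$; since (as $n\geqslant r_j$) every module in $\mathrm{mod}_F(n,r_j)$ is a homomorphic image of a direct sum of copies of $E_n^{\otimes r_j}$ while the relevant weight--space functors are exact and additive (cf. \eqref{E:wtspace} and \cite{G}), the claim follows. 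Thus on $W_0$ the $i$th factor of $Y$ acts only on the $i$th tensorand, via the restriction to $\sym{B_i}$ of the $\GL_n(F)$-action on $M_i$. Finally, choosing $\rho_i\in\sym{n}\leqslant\GL_n(F)$ that restricts to the order-preserving bijection $\{1,\dotsc,r_i\}\to B_i$, right multiplication by $\rho_i$ is a linear isomorphism $f_{r_i}(M_i)=M_i^{\omega_{r_i}}\to M_i^{\mathbf{1}_{B_i}}$ intertwining the $\sym{r_i}$-action on the source with the $\sym{B_i}$-action on the target, under the identification $\sym{r_i}\cong\sym{B_i}$ built into $Y$. Tensoring these isomorphisms gives the required $FY$-module isomorphism, which together with the previous step proves the lemma. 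As the authors note, all of this is contained in the slightly more general \cite[\S2.5, Lemma]{SDKE}, so in practice one simply cites that result.
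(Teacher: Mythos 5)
The paper does not include a proof of this lemma; it is quoted directly from \cite[\S 2.5, Lemma]{SDKE}, so there is no argument in the paper for you to compare against. Judged on its own terms, your strategy is the natural one and is essentially correct: decompose $(M_1\otimes\dotsb\otimes M_l)^{\omega_r}$ into the summands indexed by ordered partitions $(S_1,\dotsc,S_l)$ of $\{1,\dotsc,r\}$, observe that $\sym{r}$ permutes these summands transitively with the Young subgroup $Y$ as the stabiliser of the standard summand $W_0$, apply the recognition criterion for induced modules, and finally identify $W_0$ as an $FY$-module.

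There is, however, a genuine gap in the final bookkeeping step. To show that the $i$th factor $\sym{r_i}$ of $Y$ acts trivially on $M_j^{\mathbf{1}_{B_j}}$ when $j\neq i$, you assert that every module in $\mathrm{mod}_F(n,r_j)$ is a homomorphic image of a direct sum of copies of $E_n^{\otimes r_j}$. This is false. As a module for the Schur algebra $S(n,r_j)$, the tensor space $E_n^{\otimes r_j}$ is isomorphic to $S(n,r_j)\xi_{\omega_{r_j}}$; it is therefore projective, and the multiplicity of the projective indecomposable $P(\lambda)$ in it equals $\dim L(\lambda)^{\omega_{r_j}}$, which vanishes for many $\lambda$. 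For instance, when $r_j=p$ and $\lambda=(p,0,\dotsc,0)$, the simple $L(\lambda)$ is the Frobenius twist of $E_n$ and has no weight equal to $\omega_p$. So $E_n^{\otimes r_j}$ is not a generator of $\mathrm{mod}_F(n,r_j)$, and the reduction you propose is unavailable.

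The claim you are trying to justify is nevertheless true, and there is a direct argument that bypasses the issue entirely. Restrict $M_j$ to the Levi subgroup $L_j$ of $\GL_n(F)$ consisting of block-diagonal matrices with one block on the coordinates in $B_j$ and the other on the complementary coordinates. Then $M_j$ is a homogeneous polynomial $FL_j$-module and decomposes by bi-degree, $M_j=\bigoplus_{a+b=r_j}M_j^{(a,b)}$, where $M_j^{(a,b)}$ is the sum of the weight spaces $M_j^{\beta}$ with $\sum_{k\in B_j}\beta_k=a$. The space $M_j^{\mathbf{1}_{B_j}}$ sits inside $M_j^{(r_j,0)}$, which as a polynomial module for the second block has degree $0$ and is therefore a trivial module for it; and every element of the $i$th factor of $Y$ with $i\neq j$ fixes $B_j$ pointwise and hence lies in that second block. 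This gives the required triviality, and with that substitution the remainder of your argument (including the shift by $\rho_i$ to identify $M_i^{\mathbf{1}_{B_i}}$ with $f_{r_i}(M_i)$) goes through.
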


\begin{lem} \label{L:dim}
Let $n \geqslant qk$ where $q$ and $k$ are positive integers.  Then
$$
\dim f_{qk}(L^k(E_n^{\otimes q})) = (qk)!/k.
$$
\end{lem}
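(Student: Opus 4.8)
The plan is to reduce the claim to counting the dimension of a single weight space, and then to carry out that count by the combinatorics of Lyndon words — the same circle of ideas as in \cite{CR}, which is already in play.

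\emph{Setup.} Since $E_n$ is a homogeneous polynomial $F\GL_n(F)$-module of degree $1$, the tensor power $E_n^{\otimes q}$ is homogeneous polynomial of degree $q$, and hence $L^k(E_n^{\otimes q})$, being a submodule of $(E_n^{\otimes q})^{\otimes k}=E_n^{\otimes qk}$, is a finite-dimensional homogeneous polynomial module of degree $qk$. As $n\geqslant qk$, the Schur functor $f_{qk}$ applies to it, and by definition $f_{qk}(L^k(E_n^{\otimes q}))=L^k(E_n^{\otimes q})^{\omega_{qk}}$. So it suffices to show this weight space has dimension $(qk)!/k$. Write $W=E_n^{\otimes q}$, with standard basis $\{e_{\mathbf i}:\mathbf i=(i_1,\dots,i_q)\in\{1,\dots,n\}^q\}$, where $e_{\mathbf i}=e_{i_1}\otimes\dots\otimes e_{i_q}$ has $\GL_n$-weight $\mathrm{wt}(e_{\mathbf i})=\epsilon_{i_1}+\dots+\epsilon_{i_q}$ ($\epsilon_j$ denoting the $j$th standard weight).

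\emph{A weighted Lyndon basis.} Fix a total order on the basis $\{e_{\mathbf i}\}$ of $W$. By the Chen--Fox--Lyndon construction of the Lyndon (or Hall) basis of the free Lie algebra (see \cite{CR}), $L^k(W)$ has a basis indexed by the Lyndon words of length $k$ over the alphabet $\{e_{\mathbf i}\}$, in which the basis element attached to a Lyndon word $b_1b_2\cdots b_k$ is an iterated Lie bracket of the letters $b_1,\dots,b_k$; it is therefore weight-homogeneous of weight $\sum_{j=1}^k\mathrm{wt}(b_j)$. Hence $\dim L^k(W)^{\omega_{qk}}$ is the number of Lyndon words $b_1\cdots b_k$ of length $k$ over $\{e_{\mathbf i}\}$ with $\sum_{j=1}^k\mathrm{wt}(b_j)=\omega_{qk}$.

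\emph{The count.} Writing each $b_j=e_{\mathbf i^{(j)}}$ and concatenating the $q$-tuples $\mathbf i^{(1)},\dots,\mathbf i^{(k)}$ produces a word of length $qk$ over $\{1,\dots,n\}$; the weight condition $\sum_j\mathrm{wt}(b_j)=\omega_{qk}$ says precisely that this word is a permutation of $(1,\dots,qk)$. Conversely, any permutation of $(1,\dots,qk)$, cut into $k$ consecutive blocks of length $q$, yields such a word $b_1\cdots b_k$, and its letters are pairwise distinct (a repeated letter would force a repeated entry among $1,\dots,qk$); thus $b_1\cdots b_k$ is aperiodic and exactly one of its $k$ cyclic rotations is a Lyndon word. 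This gives a $k$-to-$1$ surjection from the $(qk)!$ permutations of $(1,\dots,qk)$ onto the set of weight-$\omega_{qk}$ Lyndon words of length $k$ — the $k$ preimages of such a Lyndon word being its $k$ distinct cyclic rotations. Hence $(qk)!=k\cdot\dim L^k(W)^{\omega_{qk}}=k\cdot\dim f_{qk}(L^k(E_n^{\otimes q}))$, which is the assertion. The only point requiring care is the claim that the Lyndon basis is weight-homogeneous with the stated weights; this is standard but should be cited precisely. (Alternatively, one can avoid word combinatorics: $\dim f_{qk}(L^k(E_n^{\otimes q}))$ is the multilinear, i.e.\ weight-$(1^{qk})$, component of the degree-$qk$ polynomial functor $V\mapsto L^k(V^{\otimes q})$, extracted by the inclusion--exclusion $\sum_{j=0}^{qk}(-1)^{qk-j}\binom{qk}{j}\dim L^k(E_j^{\otimes q})=\sum_{j=0}^{qk}(-1)^{qk-j}\binom{qk}{j}w(j^q,k)$ via \eqref{E:dim(L^r(V))}; since $\sum_j(-1)^{qk-j}\binom{qk}{j}j^{qk/d}$ is $0$ for $d\geqslant2$ and $(qk)!$ for $d=1$, this collapses to $(qk)!/k$.)
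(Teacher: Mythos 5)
Your main argument is correct and lands in the same place as the paper's proof by essentially the same combinatorics. Both reduce to computing the $\omega_{qk}$-weight space of $L^k(E_n^{\otimes q})$ and then count a basis of the multilinear part of the free Lie algebra on the $q$-fold tensor monomials. The paper's organization is slightly different from yours: it first partitions the weight space as $\bigoplus_{\phi\in\Phi}W_\phi$, where $\phi$ ranges over the $(qk)!/k!$ unordered $k$-sets of pairwise disjoint $q$-tuples covering $\{1,\dotsc,qk\}$, and then applies the multigraded Witt formula with $n_1=\dots=n_k=1$ to get $\dim W_\phi=(k-1)!$. You instead count Lyndon words of length $k$ over the big alphabet directly, with the $k$-to-$1$ map from permutations of $(1,\dotsc,qk)$ onto aperiodic length-$k$ words via cyclic rotation. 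These are two presentations of the same count — the paper's $(qk)!/k!\cdot(k-1)!$ is your $(qk)!/k$ after grouping words by their underlying set of letters — so I would not call this a genuinely different route, merely a cleaner bookkeeping that avoids introducing $\Phi$ explicitly. The one presentational point you rightly flag, that the Lyndon/Hall basis elements are weight-homogeneous with the sum of the letter weights, is exactly the content of the paper's observation that each bracket $[e_{\theta_1},\dotsc,e_{\theta_k}]$ lies in a single weight space, so you are on safe ground. Your parenthetical alternative — extracting the multilinear component by inclusion–exclusion on $\dim L^k(E_j^{\otimes q})=w(j^q,k)$ and collapsing $\sum_j(-1)^{qk-j}\binom{qk}{j}j^{qk/d}$ via surjection counts — is a genuinely independent verification that bypasses bases entirely; it is a nice check but heavier than needed here.
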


\begin{proof}
  Let $I(n,q)$ be the set of all ordered $q$-tuples $(i_1,\dotsc, i_q)$ where $i_1,\dotsc, i_q \in \{1,\dotsc, n\}$.  For $\theta \in I(n,q)$, where $\theta = (i_1,\dotsc, i_q)$, write
  $$e_{\theta} = e_{i_1} \otimes \dotsb \otimes e_{i_q} \in E_n^{\otimes q}.
  $$
  Thus $\{ e_{\theta} \mid \theta \in I(n,q) \}$ is a basis for $E_n^{\otimes q}$.

  The $F\GL_n(F)$-module $L^k(E_n^{\otimes q})$ is spanned as a vector space by the elements $[e_{\theta_1},\dotsc, e_{\theta_k}]$ with $\theta_1,\dotsc, \theta_k \in I(n,q)$.  It is easily verified that each of these elements belongs to some weight space of $L^k(E_n^{\otimes q})$.  Thus, by \eqref{E:wtspace}, $(L^k(E_n^{\otimes q}))^{\omega_{qk}}$ is spanned by those elements $[e_{\theta_1},\dotsc, e_{\theta_k}]$ which belong to it.  These are the elements $[e_{\theta_1},\dotsc, e_{\theta_k}]$ such that each of $e_1,\dotsc, e_{qk}$ occurs once and only once among the tensor factors of $e_{\theta_1},\dotsc, e_{\theta_k}$.

  Let $\Phi$ be the set of all $k$-element subsets $\{ \theta_1,\dotsc, \theta_k\}$ of $I(n,q)$ such that each $q$-tuple $\theta_j$ ($1 \leqslant j \leqslant k$) has $q$ distinct entries and $\{1,\dotsc, qk\}$ is the disjoint union of these $k$ sets of entries.  For each $\phi \in \Phi$ let $W_{\phi}$ be the subspace of $L^k(E_n^{\otimes q})$ spanned by the elements $[e_{\theta_1},\dotsc, e_{\theta_k}]$ where $\{\theta_1,\dotsc, \theta_k\} = \phi$.  It follows that
  $$
  f_{qk}(L^k(E_n^{\otimes q})) = (L^k(E_n^{\otimes q}))^{\omega_{qk}} = \bigoplus_{\phi \in \Phi} W_{\phi}.
  $$
  It is easily verified that $|\Phi| = (qk)!/k!$.  Also, by \cite[Theorem 5.11]{WMAKDS}, $\dim W_{\phi} = (k-1)!$ for all $\phi \in \Phi$. Thus
  $$
  \dim f_{qk}(L^k(E_n^{\otimes q})) = (qk)!(k-1)!/k! = (qk)!/k.
  $$
\end{proof}

We consider $B^r(V)$, as in Section \ref{lie power section}, in the special case where $G=\GL_r(F)$ and $V=E_r$ and we define $$C(r)=f_r(B^r(E_r)).$$ The following result will enable us to use $F\GL_n(F)$-modules for arbitrary values of $n$ with $n\geqslant r$.

\begin{lem}\label{Cr independent of n} For all $n\geqslant r$, we have $$C(r)\cong f_r(B^r(E_n)).$$
\end{lem}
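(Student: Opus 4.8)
The plan is to use the splitting of $E_r$ off $E_n$ to transport $C(r)$ onto $f_r(B^r(E_n))$. Write $\iota\colon E_r\to E_n$ for the inclusion of the span of $e_1,\dots,e_r$ and $\pi\colon E_n\to E_r$ for the projection annihilating $e_{r+1},\dots,e_n$; regarding $E_n$ as an $F\GL_r(F)$-module by restriction along $\GL_r(F)\leqslant\GL_n(F)$, these are $\GL_r(F)$-module homomorphisms with $\pi\iota=\mathrm{id}_{E_r}$. They induce Lie algebra homomorphisms and hence $\GL_r(F)$-homomorphisms $L^r(\iota)\colon L^r(E_r)\to L^r(E_n)$ and $L^r(\pi)\colon L^r(E_n)\to L^r(E_r)$ with $L^r(\pi)L^r(\iota)=\mathrm{id}$.

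First I would record that $L^r(\iota)$ restricts to an $F\sym r$-isomorphism between $(L^r(E_r))^{\omega_r}$ and $(L^r(E_n))^{\omega_r}$ — each of these equals $\Lie(r)$, being spanned by the left-normed products $[e_{j_1},\dots,e_{j_r}]$ in which $j_1,\dots,j_r$ is a permutation of $1,\dots,r$ — since the map is injective (as $\iota$ is), is $\sym r$-equivariant and carries one span to the other, so is bijective by a dimension count, with inverse the restriction of $L^r(\pi)$. Next, invoking the functoriality of the construction of $B^r$ in \cite{RBMS} — that a linear map $\phi\colon V\to W$ carries $B^r(V)$ into $B^r(W)$ under $L^r(\phi)$, and in particular that the subspace $B^r(E_n)$ of $E_n^{\otimes r}$ is the same whether $E_n$ is viewed as a $\GL_n(F)$- or a $\GL_r(F)$-module — the maps $L^r(\iota)$ and $L^r(\pi)$ send $B^r(E_r)$ into $B^r(E_n)$ and $B^r(E_n)$ into $B^r(E_r)$ respectively. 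Passing to $\omega_r$-weight spaces, $L^r(\iota)$ maps $C(r)=(B^r(E_r))^{\omega_r}$ into $f_r(B^r(E_n))=(B^r(E_n))^{\omega_r}$ while its inverse $L^r(\pi)$ maps $f_r(B^r(E_n))$ back into $C(r)$; hence $C(r)=L^r(\pi)(f_r(B^r(E_n)))$ has image exactly $f_r(B^r(E_n))$ under $L^r(\iota)$, so $L^r(\iota)$ restricts to an $F\sym r$-isomorphism $C(r)\to f_r(B^r(E_n))$, as required.

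The only delicate point is the functoriality of $B^r$: in \cite{RBMS} the modules $B_r=B^r(V)$ are obtained from natural idempotent operators, so it holds, but if one prefers to avoid appealing to it there is an alternative by induction on $m$, where $r=p^mk$ with $p\nmid k$. For $m=0$ one has $B^k(E_n)=L^k(E_n)$ (because $b_k=1$), so $f_k(B^k(E_n))=\Lie(k)$ is independent of $n$. For the inductive step, apply $f_r$ to \eqref{E:decompthm} with $V=E_n$: by Lemma~\ref{L:DE} every summand other than $p^m f_r(B^r(E_n))$ is built by induction from modules independent of $n$, and the right-hand side $f_r(L^k(E_n^{\otimes p^m}))$ is itself independent of $n$ — this is implicit in the proof of Lemma~\ref{L:dim}, where the relevant index set $\Phi$ involves only the entries $1,\dots,p^m k$ — so a Krull--Schmidt cancellation forces $f_r(B^r(E_n))$, and in particular its isomorphism type, not to depend on $n$, which again gives the lemma.
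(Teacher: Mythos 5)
Your proposal offers two routes and both are correct in substance, but they sit differently relative to the paper. The paper's own proof is short: it invokes the truncation functor $d_{n,r}\colon\module_F(n,r)\to\module_F(r,r)$ of Green, observes that $f_r\circ d_{n,r}=f_r$ (because $\omega_r$ lies in $\Lambda(r,r)^\ast$), and then cites \cite[Theorem 4.2]{RBMS} for the key compatibility $d_{n,r}(B^r(E_n))=B^r(E_r)$. Your first route, with $\iota$ and $\pi$ satisfying $\pi\iota=\mathrm{id}$ and a passage to $\omega_r$-weight spaces, is essentially a hands-on reformulation of that same truncation argument; but the ``functoriality of $B^r$'' you appeal to is not something you can take for granted --- the subspace $B^r(V)$ is built in \cite{RBMS} by a multi-step recursive construction, not as the image of one obvious natural idempotent --- and in fact the precise statement you need is exactly what \cite[Theorem 4.2]{RBMS} supplies. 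So the first route is correct once that reference is filled in, and at that point it coincides with the paper's proof. Your second route is genuinely different and, to my mind, the more interesting contribution: it sidesteps \cite[Theorem 4.2]{RBMS} entirely and instead bootstraps from the isomorphism \eqref{E:decompthm} (which the paper already uses for other purposes) by applying $f_r$, inducting on $m$ with $r=p^mk$ fixed $k$, observing that the right-hand side $f_r(L^k(E_n^{\otimes p^m}))$ is manifestly independent of $n$ from the explicit basis in the proof of Lemma~\ref{L:dim}, and cancelling by Krull--Schmidt (valid since $F\sym r$-mod is Krull--Schmidt, and $p^mA\cong p^mB$ forces $A\cong B$). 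This buys you a proof that stays entirely within the toolbox the present paper has already assembled, at the cost of being longer than the one-line truncation argument; both conclusions are sound.
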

\begin{proof} We regard $E_r$ as a subspace of $E_n$ with $\GL_r(F)\leqslant \GL_n(F)$ in the obvious way. Let $d_{n,r}:\module_F(n,r)\to \module_F(r,r)$ be the truncation map (see \cite[\S 6.5]{G}). For all $M\in \module_F(n,r)$, we have $$d_{n,r}(M)=\bigoplus_{\beta\in \Lambda(r,r)^\ast} M^\beta$$ where $\Lambda(r,r)^\ast$ is the subset of $\Lambda(n,r)$ consisting of those weights $(\beta_1,\ldots,\beta_n)$ such that $\beta_{r+1}=\cdots=\beta_n=0$ (see \cite[(6.5c)]{G}). In particular, $f_r(d_{n,r}(M))=f_r(M)$. It is a consequence of \cite[Theorem 4.2]{RBMS} that $d_{n,r}(B^r(E_n))=B^r(E_r)$. The result follows.
\end{proof}

By definition, $C(r)$ is a submodule of $\Lie(r)$. However, $B^r(E_r)$ is a direct summand of $E_r^{\otimes r}$ and, by \cite[(6.3d)]{G}, $f_r(E_r^{\otimes r})$ is a regular $F\sym{r}$-module. Thus, by \eqref{E:directsum}, $C(r)$ is a projective $F\sym{r}$-module, and so $\dim C(r) \leqslant \dim \Liemax(r)$.  Hence, in order to prove Theorem \ref{T:main2}, it suffices to prove that
$$
\lim_{\substack{r \to \infty \\ r \in \mathcal{A}}} \frac{\dim C(r)}{\dim \Lie(r)} = 1.
$$
For each $r \geqslant 1$, we define
\begin{equation} \label{E:c_r}
c_r = \frac{\dim C(r)}{\dim \Lie(r)} = \frac{\dim C(r)}{(r-1)!}.
\end{equation}
Thus $0 \leqslant c_r \leqslant 1$.  It suffices to prove that $c_r \to 1$ as $r \to \infty$ with $r \in \mathcal{A}$.

Suppose that $m$ is a non-negative integer and $k$ is a positive integer such that $p \nmid k$ and $k \geqslant 2$.  Take $n \geqslant r = p^mk$ and apply $f_r$ to the isomorphism \eqref{E:decompthm} with $V=E_n$.  By \eqref{E:directsum} we obtain
\begin{equation}\label{E:recurrenceB} \bigoplus_{i=0}^m p^{m-i}f_r(B^{p^{m-i}k}(E_n)^{\otimes p^i}) \cong f_r ( L^k(E_n^{\otimes p^m})).
\end{equation}
By Lemma \ref{L:DE}, Lemma \ref{Cr independent of n} and \eqref{E:c_r}, we have
$$
\dim f_r(B^{p^{m-i}k} (E_n)^{\otimes p^i}) = \frac{(p^mk)!}{((p^{m-i}k)!)^{p^i}} (\dim C(p^{m-i}k))^{p^i} =
\frac{(p^mk)!}{(p^{m-i}k)^{p^i}} (c_{p^{m-i}k})^{p^i}.
$$
Hence, by \eqref{E:recurrenceB} and Lemma \ref{L:dim}, we obtain
\begin{equation} \label{E:recurrenceLie}
\sum_{i=0}^m \frac{p^{m-i}(p^mk)!}{(p^{m-i}k)^{p^i}} (c_{p^{m-i}k})^{p^i} = \frac{(p^mk)!}{k}.
\end{equation}
For $i = 0,1,\dotsc, m$, define
$$
a_i' = (p^{m-i}k)^{-(p^i-1)}.$$
Then \eqref{E:recurrenceLie} may be written as
\begin{equation} \label{E:recurrenceLienew}
c_{p^mk} = 1 -
\sum_{i=1}^m a'_i (c_{p^{m-i}k})^{p^i}.
\end{equation}

\begin{lem} \label{L:equalityLie}
Suppose that $0 \leqslant s \leqslant i \leqslant m$.  Then
$$
\frac{a'_i}{a'_{i-s}} = p^{-s} \left(\frac{p^s}{(p^{m-i}k)^{p^s-1}} \right)^{p^{i-s}}.
$$
\end{lem}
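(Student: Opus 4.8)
The plan is to prove the identity by a direct computation, since $a'_j$ is given by the explicit closed formula $a'_j = (p^{m-j}k)^{-(p^j-1)}$ and the claimed relation is a purely algebraic identity in the parameters $p$, $k$, $m$, $i$, $s$, involving no module theory.

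First I would substitute the definition into the ratio. From $a'_j = (p^{m-j}k)^{-(p^j-1)}$ we get
$$
\frac{a'_i}{a'_{i-s}} = (p^{m-i}k)^{-(p^i-1)}\,(p^{m-i+s}k)^{p^{i-s}-1}.
$$
The key manipulation is then to use the factorisation $p^{m-i+s}k = p^s\cdot(p^{m-i}k)$, which lets me rewrite the second factor as $p^{s(p^{i-s}-1)}\,(p^{m-i}k)^{p^{i-s}-1}$, so that all powers of $(p^{m-i}k)$ can be collected together.

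Collecting the exponent of $(p^{m-i}k)$ gives $(p^{i-s}-1)-(p^i-1) = p^{i-s}-p^i = -p^{i-s}(p^s-1)$, so that
$$
\frac{a'_i}{a'_{i-s}} = p^{\,s(p^{i-s}-1)}\,(p^{m-i}k)^{-p^{i-s}(p^s-1)}.
$$
Finally I would expand the right-hand side of the asserted identity in the same fashion, namely
$$
p^{-s}\left(\frac{p^s}{(p^{m-i}k)^{p^s-1}}\right)^{p^{i-s}} = p^{-s}\,p^{\,sp^{i-s}}\,(p^{m-i}k)^{-(p^s-1)p^{i-s}},
$$
and observe that the power of $p$ here is $p^{-s+sp^{i-s}} = p^{s(p^{i-s}-1)}$ and the power of $(p^{m-i}k)$ is $(p^{m-i}k)^{-p^{i-s}(p^s-1)}$, matching the expression obtained above. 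This proves the lemma (the degenerate cases $s=0$ and $s=i$ are covered by the same formula). The only point requiring care is bookkeeping the nested exponents $p^{i-s}$, $p^i$, $p^s-1$ correctly; there is no genuine obstacle.
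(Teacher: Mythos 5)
Your computation is correct and takes essentially the same approach as the paper: both proofs substitute the closed-form expression $a'_j = (p^{m-j}k)^{-(p^j-1)}$ into the ratio, use $p^{m-i+s}k = p^s\cdot p^{m-i}k$ to separate the powers of $p$ from the powers of $p^{m-i}k$, and verify the exponent identity by direct bookkeeping.
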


\begin{proof}
We have
$$
\frac{a'_i}{a'_{i-s}} = \frac{(p^{m-i+s}k)^{p^{i-s}-1}}{(p^{m-i}k)^{p^i-1}} = \frac{(p^s)^{p^{i-s}-1}}{(p^{m-i}k)^{p^i-p^{i-s}}} = p^{-s} \left(\frac{p^s}{(p^{m-i}k)^{p^s-1}} \right)^{p^{i-s}}.
$$
\end{proof}

\begin{proof}[Proof of Theorem \ref{T:main2}]
Note the similarity between \eqref{E:recurrenceLienew} and \eqref{E:eq2}.  Lemma \ref{L:equalityLie} is stronger than Lemma \ref{L:<=1} and therefore gives an analogue of Corollary \ref{C:<=1}.  We do not need an analogue of Lemma \ref{L:>=} because the first term on the right-hand side of \eqref{E:recurrenceLienew} is 1.  Thus, by the argument of the proof of Theorem \ref{T:main1}, we obtain $c_r \to 1$ as $r \to \infty$ with $r \in \mathcal{A}$.
\end{proof}

\end{document}